\documentclass[11pt,a4paper,english]{article}
\usepackage[utf8]{inputenc}
\usepackage{xcolor}
\usepackage{babel}
\usepackage{mathtools}
\usepackage{amsmath}
\usepackage{amsthm}
\usepackage{amssymb}
\usepackage{stmaryrd}
\usepackage[bookmarks=true,bookmarksnumbered=false,bookmarksopen=false,
 breaklinks=false,pdfborder={0 0 1},backref=false,colorlinks=false]
 {hyperref}
\hypersetup{
 linkcolor=blue}

\makeatletter

\pdfpageheight\paperheight
\pdfpagewidth\paperwidth

\numberwithin{equation}{section}
\numberwithin{figure}{section}

\usepackage{ dsfont }

\newcommand{\df}{\mathrm{d}}

\allowdisplaybreaks

\makeatother

\theoremstyle{plain}
\newtheorem{thm}{\protect\theoremname}[section]
\newtheorem{lem}[thm]{\protect\lemmaname}
\theoremstyle{remark}
\newtheorem{rem}[thm]{\protect\remarkname}
\theoremstyle{plain}
\newtheorem{prop}[thm]{\protect\propositionname}
\providecommand{\lemmaname}{Lemma}
\providecommand{\propositionname}{Proposition}
\providecommand{\remarkname}{Remark}
\providecommand{\theoremname}{Theorem}

\begin{document}
\global\long\def\df{\mathrm{def}}%
\global\long\def\eqdf{\stackrel{\df}{=}}%
\global\long\def\ep{\varepsilon}%
\global\long\def\ind{\mathds{1}}%

\title{Small eigenvalues of hyperbolic surfaces with many cusps}
\author{Will Hide and Joe Thomas}
\maketitle
\begin{abstract}
We study topological lower bounds on the number of small Laplacian
eigenvalues on hyperbolic surfaces. We show that for any $a>0$,
there exists $b>0$ such that if $g<an$ and $X$ has genus $g$ and
$n$ cusps, then $X$ has at least $b(2g+n-2)$ small eigenvalues.
This saturates the sharp upper bound $2g+n-2$ of Otal and Rosas up
to a constant multiplicative factor.

{\footnotesize\tableofcontents{}}{\footnotesize\par}
\end{abstract}

\section{Introduction}

Let $X$ be a connected, finite-area hyperbolic surface. $X$ is said
to be of signature $\left(g,n\right)$ if it is homeomorphic to a
genus $g$ surface with $n$ punctures. The spectrum of the Laplacian
$\Delta_{X}$ contained in $\Big[0,\frac{1}{4}\Big)$ is discrete,
consisting of finitely many eigenvalues of finite multiplicity, referred
to as $\textit{small eigenvalues}$. 

A series of works (see Section \ref{subsec:Related-work}) give topological
upper bounds on the number of small eigenvalues, culminating in the
following result of Otal and Rosas, which is sharp \cite{Bu.1977}.
\begin{thm}[\cite{Ot.Ro09}]
\label{thm:BMM}Any hyperbolic surface of signature $\left(g,n\right)$
has at most $2g+n-2$ small eigenvalues.
\end{thm}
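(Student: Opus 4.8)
The scheme is that of Otal and Otal--Rosas, extended to the finite-type setting by Ballmann--Matthiesen--Mondal: a min--max reduction, a comparison with the bottom of the $L^2$-spectrum of $\H^2$, and a topological count of nodal domains. \emph{Step 1 (min--max).} Each cusp contributes essential spectrum equal to $[1/4,\infty)$, so $\Delta_X$ has only finitely many eigenvalues in $[0,1/4)$; let $N$ be their number, counted with multiplicity. By the min--max principle, $N$ is the largest dimension of a subspace $V\subset H^1(X)\cap L^2(X)$ on which the Rayleigh quotient $R(\phi)=\tfrac{\int_X|\nabla\phi|^2}{\int_X\phi^2}$ stays strictly below $1/4$. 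It therefore suffices to show that every such $V$ has $\dim V\le 2g+n-2=-\chi(X)$. Suppose not, and take $V$ of dimension $-\chi(X)+1$ inside the span of all eigenfunctions with eigenvalue $<1/4$; then $\Lambda:=\max_{\phi\in V\setminus\{0\}}R(\phi)<1/4$, and every $\phi\in V$ is a finite linear combination of eigenfunctions, hence real-analytic, so the nodal set $Z(\phi)=\phi^{-1}(0)$ is an embedded graph and $X\setminus Z(\phi)$ is a finite disjoint union of open subsurfaces $\Omega_1,\dots,\Omega_r$, the nodal domains of $\phi$.

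\emph{Step 2 (comparison with $\H^2$).} Since the bottom of the $L^2$-spectrum of $\H^2$ is $1/4$, comparison with $\H^2$ and with the model cusp shows that any topological disk, any annulus whose core curve is contractible in $X$, and any cusp neighbourhood has bottom Dirichlet eigenvalue at least $1/4$; call every other subsurface of $X$ \emph{essential} (this dichotomy is due to Otal). For a single eigenfunction of eigenvalue $<1/4$, each nodal domain $\Omega$ carries the Dirichlet eigenfunction $\phi|_\Omega$ of the same eigenvalue and so is essential. For a general $\phi\in V$, integrating by parts over each $\Omega_i$ --- the boundary integral vanishes because $\phi$ is zero on $\partial\Omega_i$ --- and summing gives
\[
\sum_{i=1}^{r}\Big(\int_{\Omega_i}|\nabla\phi|^2-\Lambda\int_{\Omega_i}\phi^2\Big)=\int_X|\nabla\phi|^2-\Lambda\int_X\phi^2\le 0,
\]
so at least one $\Omega_{i_0}$ satisfies $\lambda_0^{\mathrm{Dir}}(\Omega_{i_0})\le R(\phi|_{\Omega_{i_0}})\le\Lambda<1/4$ and is therefore essential.

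\emph{Step 3 (topological count).} Additivity of the compactly-supported Euler characteristic gives $\chi(X)=\chi(Z(\phi))+\sum_i\chi_c(\Omega_i)$. A nodal set has no free endpoints, so every component of $Z(\phi)$ has nonpositive Euler characteristic (its singular points, the common zeros of $\phi$ and $\nabla\phi$, have even valence $\ge 4$), while every essential $\Omega_i$ has $\chi_c(\Omega_i)\le 0$, and in fact $\le -1$ unless $\Omega_i$ is an essential annulus. Exploiting the $\dim V-1$ available linear conditions --- in the spirit of Cheng's multiplicity estimate, one uses them to force a $\phi\in V$ whose nodal set is topologically as rich as possible (for instance carrying a high-valence vertex) while its pieces remain essential --- one pushes $Z(\phi)$ into a decomposition of $X$ that, by the Euler-characteristic identity above and the essentiality of the pieces, is impossible once $\dim V>-\chi(X)$, which is the required contradiction.

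The main obstacle, and the technical core of Otal--Rosas, is reconciling Steps 2 and 3: the summation argument yields only \emph{one} guaranteed essential nodal domain for each $\phi\in V$, and the elements of the min--max space $V$ are linear combinations of eigenfunctions rather than genuine eigenfunctions, so closing Step 3 requires a delicate deformation argument inside $V$ that controls enough of the nodal picture at once. A further point, specific to the finite-type setting, is that one must invoke the exponential decay of small eigenfunctions along each cusp --- in the Fourier expansion at a cusp the non-$L^2$ leading coefficient is forced to vanish --- so that each cusp neighbourhood lies in a single nodal domain and the ends of the $\Omega_i$ entering the cusps do not interfere with the Euler-characteristic bookkeeping.
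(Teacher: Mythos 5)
This theorem is not proved in the paper at all --- it is quoted from Otal--Rosas (and its extension to finite type by Ballmann--Matthiesen--Mondal), so there is no in-paper argument to compare against; your proposal has to stand on its own as a proof of the cited result, and it does not. Steps 1 and 2 are fine and faithfully reproduce the standard reductions: min--max turns the problem into bounding the dimension of a subspace $V$ with Rayleigh quotient $<1/4$, and the comparison with $\H^2$ and the model cusp gives the dichotomy between essential and non-essential subsurfaces together with the fact that each $\phi\in V$ has at least one essential nodal domain. But Step 3, which is the entire content of the theorem, is not an argument: ``one pushes $Z(\phi)$ into a decomposition of $X$ that \dots is impossible once $\dim V>-\chi(X)$'' names the desired conclusion without supplying the mechanism, and your closing paragraph concedes exactly this. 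The difficulty is not cosmetic. Knowing that \emph{each} $\phi\in V$ has \emph{one} essential nodal domain gives, a priori, no bound on $\dim V$ whatsoever: a single essential subsurface of Euler characteristic $-1$ is compatible with that conclusion for every $\phi$ simultaneously, so no Euler-characteristic bookkeeping applied to one function at a time can produce the bound $\dim V\le 2g+n-2$.

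What Otal--Rosas actually do at this point is assign to each $\phi\in V\setminus\{0\}$ a well-defined isotopy class of incompressible subsurface $Y(\phi)$ (roughly, a regular neighbourhood of the union of the essential nodal domains, completed so as to be incompressible), prove a semicontinuity/local-constancy statement for $\phi\mapsto Y(\phi)$ on the sphere of $V$, and then run a covering argument over the finitely many isotopy classes of incompressible subsurfaces, using that the Euler characteristics of subsurfaces with disjoint interiors sum to at most $-\chi(X)$. None of that is present or even gestured at concretely in your write-up; the appeal to ``Cheng's multiplicity estimate'' is a red herring, since Cheng's argument bounds the multiplicity of a single eigenvalue via the local structure of one nodal set, whereas here $V$ spans several distinct eigenvalues and the relevant objects are the isotopy classes of the essential parts, not vertex valences. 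As written, the proposal is an accurate survey of the strategy with the decisive step missing, so it does not constitute a proof.
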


In this paper we are interested in finding topological $\textit{lower bounds}$
on the number of small eigenvalues on surfaces. 

It is likely that, for example as predicted by Selberg's conjecture
\cite{Se1965}, there exist examples of hyperbolic surfaces of arbitrarily
large volume with no eigenvalues in $\left(0,\frac{1}{4}\right)$.
However, these surfaces have few cusps in comparison to their genus.
On the contrary, for surfaces with a lot of cusps, a result of Zograf
guarantees the existence of a positive small eigenvalue. 
\begin{thm}[\cite{Zo1987}]
\label{thm:Zograf}There exists a constant $c>0$ such that for any
hyperbolic surface $X$ of signature $\left(g,n\right)$, its spectral
gap $\lambda_{1}\left(X\right)$ satisfies
\[
\lambda_{1}\left(X\right)\leqslant c\frac{g+1}{n}.
\]
\end{thm}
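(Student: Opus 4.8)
The plan is to use the variational characterization of $\lambda_{1}$ and exhibit one good test function. Recall $\lambda_{1}(X)=\inf\{\int_{X}|\nabla f|^{2}/\int_{X}f^{2}:f\in C_{c}^{\infty}(X),\ \int_{X}f=0\}$, and that the continuous spectrum is $[\tfrac14,\infty)$, so $\lambda_{1}(X)\le\tfrac14$ always. Consequently the asserted inequality is vacuous unless $n>4c(g+1)$, so from now on one may assume $n$ is large compared with $g$; in this regime $2g-2+n\asymp n$, and by Gauss--Bonnet $\Vol(X)=2\pi(2g-2+n)\asymp n$.

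Next I would set up a bounded-complexity decomposition of $X$. Fix a pants decomposition and let $G$ be its dual graph: then $G$ has $|V(G)|=2g-2+n\asymp n$ vertices, maximum degree $3$, and first Betti number $b_{1}(G)=|E(G)|-|V(G)|+1=(3g-3+n)-(2g-2+n)+1=g$. Deleting a suitable set of $g$ edges leaves a spanning tree $T$, and using a centroid vertex of $T$ one produces a partition $V(G)=V^{+}\sqcup V^{-}$ with $\min(|V^{+}|,|V^{-}|)\gtrsim n$ across which at most $g+O(1)$ edges of $G$ run (the $\le 3$ tree-edges at the centroid, plus the $\le g$ deleted ones). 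The two corresponding sub-unions of pants assemble into subsurfaces $X^{\pm}$ with $\mathrm{Area}(X^{\pm})=2\pi|V^{\pm}|\gtrsim n$, and with common boundary a separating multicurve $\gamma=\gamma_{1}\cup\dots\cup\gamma_{k}$, $k\le g+O(1)$.

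The test function is then the obvious one: set $f\equiv+1$ on $X^{+}$ and $f\equiv-1$ on $X^{-}$ away from thin collar neighbourhoods $C_{j}$ of the $\gamma_{j}$, and interpolate across each $C_{j}$ using the radial profile that minimises the collar Dirichlet energy. In Fermi coordinates $(t,s)$ on $C_{j}$ one minimises $\int(u'(t))^{2}\cosh t\,dt$ with fixed endpoints, which gives $u'\propto(\cosh t)^{-1}$ and energy $\asymp\ell(\gamma_{j})$ because $\int_{\mathbb{R}}(\cosh t)^{-1}\,dt$ is finite. Hence $\int_{X}|\nabla f|^{2}\lesssim\sum_{j}\ell(\gamma_{j})$, while $\int_{X}f^{2}$ is at least $\mathrm{Area}(X)$ minus the (bounded) area of the collars, hence $\gtrsim n$; replacing $f$ by $f-\bar f$ to enforce $\int_{X}f=0$ changes $\int_{X}f^{2}$ only by a bounded factor, since $|\bar f|\le1$ and the bulk of $X$ still contributes $\gtrsim n$. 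This yields $\lambda_{1}(X)\lesssim\big(\sum_{j}\ell(\gamma_{j})\big)/n$.

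Everything therefore reduces to the geometric heart of the matter: arranging that the separating multicurve $\gamma$ can be taken with $\sum_{j}\ell(\gamma_{j})\lesssim g+1$, i.e.\ that the $O(g+1)$ curves one is forced to cut can be chosen uniformly short. This is the step I expect to be the main obstacle. A naive appeal to Bers' theorem only gives a pants decomposition with all curves of length $\le B_{g,n}$, and $B_{g,n}$ grows with $n$, so that loses factors; the real point is that one needs only $O(g+1)$ of the curves short, and that one must genuinely exploit $n\gg g$ to get this (morally: a surface with far more cusps than genus must possess an essential, area-balanced separator of bounded length), which is plausibly also why the sharper lower bound in the present paper requires extra hypotheses on short geodesics. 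I would also note that a soft Cheeger--Buser argument does not suffice here: for the most ``spread-out'' examples the Cheeger constant is only of order $n^{-1/2}$ while $\lambda_{1}$ is of order $n^{-1}$, so one really needs either the explicit construction above or a spectral argument through the Eisenstein series and the scattering matrix attached to the $n$ cusps.
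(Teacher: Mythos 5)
The paper does not prove Theorem \ref{thm:Zograf}: it is quoted from Zograf's paper \cite{Zo1987} without an argument, so there is no in-paper proof to compare you against. Judged on its own terms, your proposal is a reasonable plan but it is incomplete at exactly the step that carries all of the content.

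Your variational set-up, the collar profile $u'\propto(\cosh t)^{-1}$ with cross-collar energy $\asymp\ell(\gamma_j)$, and the book-keeping of the mean-zero correction are all fine. The reduction you arrive at --- a balanced separating multicurve $\gamma_1\cup\dots\cup\gamma_k$ with $k=O(g+1)$ and $\sum_j\ell(\gamma_j)=O(g+1)$, with each side of area $\gtrsim n$ --- is, however, not a technical lemma to be slotted in later; it is equivalent to the assertion $h(X)\lesssim(g+1)/n$ for the Cheeger constant, which combined with Buser's inequality is already the theorem. The centroid cut of the dual tree pins down only the \emph{isotopy classes} of the cut curves; it gives no control whatsoever on the lengths of their geodesic representatives, which can be arbitrarily long for a fixed topological pants decomposition. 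Making the required curves short is a genuine geometric argument that has to engage with the hyperbolic metric (collisions of expanding horocycles, short orthogeodesic seams across long-boundary pants, etc.), and you do not supply it. So the attempt has a real gap, and it is the central one.

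A secondary issue is that your aside about Cheeger--Buser being insufficient sits in tension with your own construction. Your explicit test function across collars \emph{is} the Buser test function, so it proves exactly $\lambda_1\lesssim h+h^2$ applied to the particular cut you exhibit; if a balanced separating multicurve of total length $O(g+1)$ exists then $h\lesssim(g+1)/n$ and Buser already finishes, while if there really were genus-$O(1)$ surfaces with $h\asymp n^{-1/2}$ and $\lambda_1\asymp n^{-1}$, then no cut-based test function (including yours) could certify $\lambda_1\lesssim n^{-1}$ on those surfaces and one would be forced into a non-step-function (oscillatory) profile. You cannot simultaneously rely on the cut construction and assert that cut-based bounds are too weak. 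Finally, I would drop the speculation linking this gap to the paper's extra hypotheses in Theorem \ref{thm:Systole-cond}; those hypotheses arise from controlling scattering terms in the trace formula and from the eigenvalue comparison with the noded limit $X_{\infty}$, not from the problem of finding balanced separators.
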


The purpose of this article is to show that for a hyperbolic surface
of large volume and with many cusps, the number of small eigenvalues
is in fact always linear in its volume.
\begin{thm}
\label{thm:main-thm}Given $a>0$, there exists $b>0$ such that
if $g<an$ and $X$ has genus $g$ and $n$ cusps, then $X$ has at
least $b(2g+n-2)$ small eigenvalues.
\end{thm}

This saturates the upper bound of Otal and Rosas up to a constant
multiplicative factor.

\subsection{Related work\label{subsec:Related-work}}

An erroneous statement of McKean \cite{McK1972} stating that $\lambda_{1}>\frac{1}{4}$
initiated great interest in the number of small eigenvalues of hyperbolic
surfaces. Indeed, it was proven by Randol \cite{Ra.74} that given
any $N\in\mathbb{N},$ one can always construct a surface with at
least $N$ small eigenvalues. Buser \cite{Bu.1977} showed by a different
construction that in fact there exist surfaces of signature $\left(g,n\right)$
with $2g+n-2$ small eigenvalues and also that $\lambda_{4g-2}>\frac{1}{4}$.
This latter result was extended to surfaces with cusps by Zograf \cite{Zo1987}
with $4g-2$ replaced by $4g+4n-2$. In \cite{Sc1990}, Schmutz improved
on Buser's result by proving that $\lambda_{4g-3}>\frac{1}{4}$ and
for genus $2$ surfaces that $\lambda_{2}>\frac{1}{4}$ \cite{Sc1991}.
It was conjectured by Buser \cite{Bu.1977} and Schmutz \cite{Sc1990}
that $\lambda_{2g+n-2}>\frac{1}{4}$ and this was later proven by
Otal and Rosas \cite{Ot.Ro09} (c.f. Theorem \ref{thm:BMM}). This
was extended to surfaces of finite type by Ballmann, Matthiesen and
Mondal \cite{Ba.Ma.Mo17}.

To the best of our knowledge, the only recorded non-trivial topological
lower bound on the number of small eigenvalues is the aforementioned
result of Zograf \cite{Zo1987} c.f. Theorem \ref{thm:Zograf}.

Another natural line of study is to understand topological bounds
on the number of small eigenvalues for random surfaces. To this end,
there has been significant progress made in the Weil-Petersson random
model on the moduli space $\mathcal{M}_{g,n}$. For example, in a
previous work of the authors \cite{Hi.Th.23}, it was shown that for
any $0<\varepsilon<\frac{1}{4}$ there is an explicit constant $C(\varepsilon)>0$
such that for any fixed $g\geqslant0$, a Weil-Petersson random surface
in $\mathcal{M}_{g,n}$ with $n$ large has at least $C(\ep)\left(2g+n-2\right)$
eigenvalues below $\ep$ with probability tending to $1$ as $n\to\infty$.
It is natural to expect the same result for any $g=g(n)$ with $g=o(n)$.
 Interestingly, even in the regime $\sqrt{g}\ll n\ll g$, one is
still able to find a small non-zero eigenvalue on a Weil-Petersson
random surface in $\mathcal{M}_{g,n}$ with probability tending to
$1$ as $g\to\infty$ due to the work of Shen and Wu \cite{Sh.Wu22}. 

\subsection{A word on the proof}

The starting point for our method is a basic geometric observation.
We show, by a simple area argument, that given $C_{1}>0,$ there exist
$C_{2}$ and $C_{3}$ such that any surface of signature $\left(g,n\right)$
with $g<C_{1}n$ necessarily contains at least $C_{2}n$ disjoint
embedded pairs of pants with at least one cusp whose total boundary
length is bounded above by $C_{3}$.

Now if the constant $C_{3}$ were sufficiently small to ensure that
every such pair of pants as above had a Dirichlet eigenvalue below
$\frac{1}{4}$, then Theorem \ref{thm:main-thm} would follow from
a min-max argument. However this cannot be the case, which we explain
in Remark \ref{rem:dirichlet}. Instead, we exploit the fact that
any pair of pants $P$ with at least one cusp has critical exponent
$\delta\left(P\right)>\frac{1}{2}$ and, given a uniform upper bound
on the length of the boundaries of the $P$, one has $\delta\left(P\right)>\frac{1}{2}+\kappa$
for some $\kappa>0$ uniform over all of the pants. Our goal is then
to establish an effective geodesic counting result for these pants
and apply an argument using Selberg's trace formula (Section \ref{subsec:Selberg's-trace-formula})
to conclude.

Suppose first that the systoles of the surfaces are uniformly bounded
away from $0$. Then by an effective prime geodesic theorem following
Naud \cite{Na2005} (see Section \ref{sec:PGT}) it follows that the
number of closed geodesics on $X$ of length $\leqslant T$ grows
like
\[
\frac{n}{T}\left(e^{\left(\frac{1}{2}+\kappa\right)T}+O\left(e^{\left(\frac{1}{2}+\kappa\right)\frac{T}{2}+\frac{T}{4}}\right)\right).
\]
Because the exponential growth in $T$ is faster than $\exp\left(\frac{T}{2}\right)$,
these geodesics give a large contribution to the geometric term in
the trace formula which we aim to exploit. By extracting a count for
the number of small eigenvalues from the spectral side of the trace
formula and arguing that the remaining terms have little effect on
the contribution from the geometric term, we arrive at the result.

In general we need to work harder to account for short geodesics.
Although it is natural to expect that having more very short geodesics
would actually improve lower bounds on the number of small eigenvalues,
the possible existence of very short geodesics causes issues in our
trace formula argument. The reason is that it becomes difficult to
distinguish between the contribution of small eigenvalues from the
contribution of scattering poles and possibly embedded eigenvalues
accumulating close to $s=\frac{1}{2}$. 

To get around this issue, we use a comparison argument following \cite{Co.Co.1989}.
If a surface $X$ has a collection of very short geodesics $\left\{ \gamma_{i}\right\} $,
we compare the small eigenvalues of $X$ to the small eigenvalues
of the limiting surface $X_{\infty}$ obtained by shrinking each $\gamma_{i}$
to a node. By exploiting some geometric control over $X_{\infty}$
(see Lemma \ref{lem:curves-on-limit-surface}) we can apply the previous
arguments to guarantee that $X_{\infty}$ has many small eigenvalues.
Using a variational argument (Proposition \ref{prop:passing-evalues})
we conclude that $X$ satisfies the conclusion of the theorem. 

\subsection{Notation}

For functions $f,h:\mathbb{N}\to\mathbb{\mathbb{R}}$ we write $f=O\left(h\right)$
to mean there exists $c>0$ and $N_{0}$ such that $|f(n)|\leqslant ch(n)$
for all $n\geqslant N_{0}$. We refer to $c$ as the implied constant
and it will be independent of any other parameters unless specified
otherwise.

\section{Geometric preliminaries}

\subsection{Existence of many pants with controlled boundary length}

We begin by proving that a surface with many cusps has many embedded
pairs of pants bounding one or two cusps with good control over their
boundary length.
\begin{lem}
\label{thm:many-pants}For any $a>0$ there exist constants $c_{1},c_{2}>0$
such that for any $g,n$ with $g<an$, any hyperbolic surface $X$
of signature $\left(g,n\right)\neq(0,3)$ has at least $c_{1}n$ embedded
subsurfaces with geodesic boundary and disjoint interior, which are
pairs of pants with at least one cusp and with total boundary length
$\leqslant c_{2}$. In fact, we can take 
\[
c_{1}\geqslant\frac{1}{64}\exp\left(-\frac{1}{2}-10\pi(2a+1)-2\exp\left(10\pi(2a+1)\right)\right)
\]
and $c_{2}\leqslant10\pi(2a+1).$
\end{lem}

\begin{proof}
Note first that if $X$ is a once-punctured torus, then its systole
is bounded above by $2\mathrm{arcsinh(3)}$ and this forms the boundary
of the desired pants. Since $(g,n)\neq(0,3)$ we can now assume that
$X$ has Euler characteristic at most $-2$. The length $1$ horocycle
around each cusp provides disjoint neighbourhoods that are isometric
to the cuspidal domain 
\[
\langle z\mapsto z+1\rangle\backslash\left\{ z\in\mathbb{H}\mid\text{Im}z>1\right\} .
\]
Suppose that one continues to grow these neighbourhoods simultaneously
so that the horocycles have length $C$ and that there are still at
least $\frac{n}{2}$ disjoint regions that are isometric to the cuspidal
domain 
\[
\langle z\mapsto z+1\rangle\backslash\left\{ z\in\mathbb{H}\mid\text{Im}z>\frac{1}{C}\right\} .
\]
By comparing volumes, one must have 
\[
\text{Vol}\left(X\right)=2\pi\left(2g+n-2\right)\geqslant\frac{n}{2}C.
\]
However, since $g<an$, if $C>4\left(2a+1\right)\pi$ we obtain a
contradiction, and there must be at least $\frac{n}{2}$ cusps for
which either the horocycle self intersects or intersects another horocycle
at time $<C$.

\noindent\textbf{Claim: }If for some $t\leqslant C$, the horocycles
of length $t$ around two cusps intersect, and their time $t'$ horocycles
do not self-intersect for any $t'\leqslant t$, then there is a simple
closed geodesic of length $\leqslant2C$ that separates off a pair
of pants bounding the two cusps.
\begin{proof}[Proof of claim.]
 Let $t\leqslant C$ be the length at which the two horocycles $H^{1}_{t}$
and $H^{2}_{t}$ first intersect. Since this is the first time that
the horocycles intersect, the intersections are a discrete set of
tangent points. Pick one such point $z$ and let $x_{1}$ and $x_{2}$
be the corresponding points on the non-intersecting $t-\varepsilon$
horocycles $H^{1}_{t-\varepsilon}$ and $H^{2}_{t-\varepsilon}$,
and join them by a shortest geodesic arc $\alpha$. Their union $H^{1}_{t-\varepsilon}\cup H^{2}_{t-\varepsilon}\cup\alpha$
is a closed curve that bounds just the two cusps from the remainder
of the surface since the horocycles do not self-intersect before time
$t$.

A regular neighbourhood of $H^{1}_{t-\varepsilon}\cup H^{2}_{t-\varepsilon}\cup\alpha$
consists of three simple closed curves, two of which are freely homotopic
to the respective cusps and the third that bounds the two cusps. This
latter curve is freely homotopic to a simple closed geodesic that
forms a boundary of an embedded pair of pants with the two cusps.
It is not contractible since it separates off the two cusps and it
is not homotopic to a cusp since otherwise the surface would be a
pair of pants, which we have excluded. The length of this boundary
geodesic is at most $2(t-\varepsilon)+2\ell(\alpha)\leqslant2(t-\varepsilon)+4\log\left(1+\frac{\varepsilon}{t-\varepsilon}\right)$
since $\alpha$ has length equal to twice the distance between $H_{t}$
and $H_{t-\varepsilon}$ which, since they are horocycle neighbourhoods
isometric to the cuspidal domain, is $\log\left(1+\frac{\varepsilon}{t-\varepsilon}\right)$
by direct computation in $\mathbb{H}$. Taking $\varepsilon\to0$
gives a bound on the bounding geodesic as $2t\leqslant2C$.
\end{proof}

\noindent\textbf{Claim: }If a horocycle of length $<C$ self intersects,
then there are either two simple closed geodesics with total length
$\leqslant C$ that separate off a pair of pants bounding the cusp,
or there is a simple closed geodesic of length $\leqslant C$ bounding
the pair of pants containing the cusp and one other cusp. In the latter
case, the two cusps have horocycles of length $\leqslant C$ that
intersect.
\begin{proof}[Proof of claim.]
 Let $t\leqslant C$ be the length of the horocycle at which it first
self-intersects and let $z$ be a point of self-intersection. Given
$\varepsilon>0$ small, let $H_{t-\varepsilon}$ be the horocycle
of length $t-\varepsilon$ around the cusp so that $H_{t-\varepsilon}$
has no self-intersection. Let $\alpha$ and $\beta$ be two disjoint
geodesics that originate at $z$ and meet $H_{t-\varepsilon}$ perpendicularly,
dividing it into two pieces $H^{a}_{t-\varepsilon}\sqcup H^{b}_{t-\varepsilon}=H_{t-\varepsilon}$. 

The boundary of a regular neighbourhood of $\alpha\cup\beta\cup H_{t-\varepsilon}$
consists of three simple closed curves freely homotopic to $H_{t-\varepsilon}$,
$\alpha\cup\beta\cup H^{a}_{t-\varepsilon}$ and $\alpha\cup\beta\cup H^{b}_{t-\varepsilon}$
respectively. The latter two are freely homotopic to either a pair
of disjoint simple closed geodesics that bound a pair of pants with
the cusp, or one is freely homotopic to another cusp and the other
to a simple closed geodesic bounding a pants with the two cusps. Note
that we can't have both of these curves being homotopic to a cusp
otherwise $X$ would be a thrice punctured sphere.

The total boundary length of the resulting pair of pants is at most
$2\ell(\alpha)+2\ell(\beta)+t-\varepsilon\leqslant4\log\left(1+\frac{\varepsilon}{t-\varepsilon}\right)+t-\varepsilon$
for any $\varepsilon>0$. The bound on the lengths of $\alpha$ and
$\beta$ comes from the fact that they have length equal to the distance
between length $t$ and length $t-\varepsilon$ horocycles of the
same cusp which are isometric to cuspidal domains where the computation
is explicit in $\mathbb{H}$. Taking $\varepsilon\to0$ gives an upper
bound of $t\leqslant C$ as required.

It remains to prove that if the pants has two cusps, then their horoballs
with horocycles of length $\leqslant C$ intersect. To this end, let
$\mathfrak{a}$ be the original cusp and $\mathfrak{b}$ be the new
one. Let's assume that $\gamma=\alpha\cup\beta\cup H^{a}_{t-\varepsilon}$
is the closed curve freely homotopic to $\mathfrak{b}$ so that it
corresponds to a primitive parabolic element $g$ in the deck transformation
group of $X$. Fix a base point $x$ for $\gamma$ on the length 1
horocycle around $\mathfrak{a}$ and lift $\gamma$ to $\tilde{\gamma}$
in $\mathbb{H}$ starting at some lift $\tilde{x}$ of $x$ and ending
at $g\tilde{x}$. 

Let $H$ be a lift of the horoball of length $t$ around the cusp
$\mathfrak{a}$ that contains $\tilde{x}$. Thus, $\tilde{\gamma}$
consists of an arc from $\tilde{x}$ to $\partial H$ and then leaves
$H$ and joins up to $g\tilde{x}$ which then resides in a tangent
lift $H'$ to $H$ of the length $t$ horoball around $\mathfrak{a}$.
Since $g\tilde{x}$ lies on the image of the lifted length 1 horocycle
under $g$, we have $gH=H'$. Now conjugate $g$ so that it acts like
$z\mapsto z+L$ on $\mathbb{H}$ for some $L>0$ so that the cusp
$\mathfrak{b}$ is at $\infty$. The horoball $H$ conjugates to a
Euclidean circle tangent to the real-axis and $H'=gH$ is its horizontal
shift by $L$. Since $H'$ and $H$ are tangent, it follows that their
diameter must be $L$. By shifting we can assume then that $H$ is
subtended by $x=0$ and $x=L$ which contains a cuspidal domain around
$\mathfrak{b}$. Expanding a horocycle neighbourhood about $\mathfrak{b}=\infty$,
we see that it meets $H$ at $y=L$ which corresponds to a horocycle
of length $1$ on $X$ around $\mathfrak{b}$. Since it is tangent
to the horocycle of length $t$ around $\mathfrak{a}$, we conclude
that there exists some $1<t'\leqslant t\leqslant C$ for which the
length $t'$ horocycles around $\mathfrak{a}$ and $\mathfrak{b}$
meet as required.
\end{proof}

We now want to show that we can find at least $c_{1}n$ $\textit{disjoint}$
embedded pairs of pants with control over their boundary lengths.
Consider the graph defined as follows. Put a vertex for each cusp
whose length $t$ horocycle either self-intersects or has intersected
another length $t$ horocycle for some $t\leqslant C$, so that there
are $V\geqslant\frac{n}{2}$ vertices. Connect vertices by an edge
if their length $C$ horocycles intersect. 

We want to argue that there are at least $c_{1}n$ singleton vertices
or disjoint pairs of vertices connected by an edge. For this it is
helpful to have a bound on the maximal degree of the graph.

\noindent\textbf{Claim: }There is a constant $H$, depending only
on $C$, such that the maximal degree of the graph constructed is
at most $H\leqslant8\exp\left(\frac{1}{2}+2C+2\exp\left(2C\right)\right)$.
\begin{proof}
Take any cusp $\mathfrak{a}$ and any point $y_{\mathfrak{a}}$ on
the unique length $1$ horocycle around the cusp. Let $C_{\mathfrak{a}}$
be the length-$C$ horocycle around $\mathfrak{a}$. For each length-$C$
horocycle that intersects $C_{\mathfrak{a}}$, there is a different
geodesic loop based at $y_{\mathfrak{a}}$ with length bounded by
some constant dependent only upon $C$. Indeed, if $\mathfrak{b}$
is a cusp joined to $\mathfrak{a}$ in the graph, take the geodesic
loop based at $y_{\mathfrak{a}}$ that is homotopic to the curve that
joins $y_{\mathfrak{a}}$ to an intersection point of the length-$C$
horocycles about $\mathfrak{a}$ and $\mathfrak{b}$ (this has length
at most $e^{C}+\frac{C}{2}$), then follows the length $C$ horocycle
of $\mathfrak{b}$ around (adding a length of $C$) and returns along
the first curve to $y_{\mathfrak{a}}$ (adding another length of at
most $e^{C}+\frac{C}{2}$). The total length of the geodesic loop
is thus bounded by $2C+2e^{C}$.

The geodesic loops are non-contractible (with fixed base at $y_{\mathfrak{a}}$)
since they bound a cusp; they are homotopically distinct because they
bound distinct cusps. It follows that the number $H$ of length-$C$
horocycles which intersect $C_{\mathfrak{a}}$ is bounded above by
the number of geodesic loops based at $y_{\mathfrak{a}}$ with length
bounded above by $L=2C+2e^{C}$. Since $\text{InjRad}(y_{\mathfrak{a}})>\frac{1}{2}$,
an area argument gives an absolute bound for $H\leqslant8e^{L+\frac{1}{2}}$
(see for example the proof of \cite[Lemma 6.6.4]{Bu2010}).
\end{proof}

Now let $S$ be the collection of vertices with degree $\geqslant1$
and $x=|S|\leqslant V$, so that there are $V-x$ singletons. If $x=0$
we are done. Take the vertex $v$ of largest degree in $S$ and pick
one vertex $v'$ that it is connected to. Pair $v$ and $v'$ and
remove them from $S$ along with any of their neighbours. There are
at most $H-1$ neighbours of $v$ and $v'$ respectively removed from
$S$ at this step, so that with $v$ and $v'$, we remove at most
$2H$ elements from $S$ but obtain one pair. Repeat this process
until the set $S$ is empty. The process cannot terminate before $\lfloor\frac{x}{2H}\rfloor$
steps, giving at least $\lfloor\frac{x}{2H}\rfloor$ pairs. Then $V-x+\lfloor\frac{x}{2H}\rfloor>V-x\left(1-\frac{1}{2H}\right)-\frac{1}{2}\geqslant\frac{n}{8H}\geqslant c_{1}n$
for some constant $c_{1}>0$ providing $>c_{1}n$ singletons or pairs
with no vertex appearing in more than one pair. In fact, by the upper
bound on $H$ we can take $c_{1}>\frac{1}{64}\exp\left(-\frac{1}{2}-2C-2\exp\left(2C\right)\right)$.

Each singleton corresponds to a cusp whose length $t\leqslant C$
horocycle has self-intersected and does not intersect any other length
$t$ horocycle and so the second claim above gives a pair of pants
separating off this cusp with total boundary at most $C$. 

A pair corresponds to two cusps whose length $t\leqslant C$ horocycle
intersect. If there is a $t'\leqslant C$ such that the length $t'$
horocycle of one of the cusps self-intersects but for every $t\leqslant t',$
the length $t$ horocycles of the cusps do not intersect, then we
disregard the other cusp and consider the pair of pants constructed
from the self-intersection which has total boundary length at most
$C$ by the second claim. Note that if this involves another cusp,
then either that was the paired cusp or by construction it was a cusp
that we threw away when selecting the pairs. In any case, it is not
a cusp contained in any of the other pairs, nor a singleton.

If this is not the case, then there is a time $t\leqslant C$ for
which the length $t$ horocycles of the cusps first intersect and
for every $t'\leqslant t$, the length $t'$ horocycles around the
cusps do not self-intersect. By the first claim, there is a simple
closed geodesic of length at most $c_{2}\eqdf2C$ that separates off
a pair of pants bounding the two cusps.

In conclusion, there are at least $c_{1}n$ disjoint embedded pairs
of pants in $X$ bounding one or two cusps each with total boundary
length at most $c_{2}$ and with disjoint interiors. The stated bounds
on $c_{1}$ and $c_{2}$ now follow from taking $C=5\pi(2a+1)$. 
\end{proof}

\begin{rem}
\label{rem:dirichlet}At this point, if it were true that the pants
guaranteed by Lemma \ref{thm:many-pants} all had a Dirichlet eigenvalue
below $\frac{1}{4}$, then after extending the eigenfunction by $0$
and applying a min-max argument, Theorem \ref{thm:main-thm} would
follow immediately. We now argue why this cannot be the case. 

Taking some more care to track explicit constants in the above argument,
we need to consider intersecting horocycles of length at least $2\pi$.
In the case where horocycles of length $2\pi$ touch, one can show
the resulting pair of pants $P$ has boundary length $2\text{arcosh}\left(2\pi^{2}-1\right)\approx7.261...$.Take
two copies of the pants $P$ and glue without twist to obtain a genus
$0$ surface $S$ with $4$ cusps. By \cite{Ot.Ro09} c.f. Theorem
\ref{thm:BMM}, $S$ can only have $1$ non-zero small eigenvalue.
Any small Dirichlet eigenfunction on the pants $P$ extends to a non-constant
eigenfunction $\phi$ on $S$ with eigenvalue $<\frac{1}{4}$ which
is odd with respect to the isometric involution given by reflection
in the boundary of $P$ in $S$. The surface $S$ also has another
isometric involution by reflection around the closed geodesic orthogonal
to the boundary of $P$ in $S$, decomposing $S$ as another doubled
pair of pants $P'$. One can calculate (using \cite[Formula 2.6.17]{Th2022})
that $P'$ has boundary length 
\[
4\text{arcsinh}\left(\frac{1}{\sinh\left(\frac{\text{arcosh}\left(2\pi^{2}-1\right)}{2}\right)}\right)\approx1.3191...<2\text{arcosh}\left(2\pi^{2}-1\right).
\]
 If it were true that every pair of pants with two cusps and with
boundary of length $\leqslant2\text{arcosh}\left(2\pi^{2}-1\right)$
has a small Dirichlet eigenvalue, then both $P$ and $P'$ would also.
Extending the non-constant eigenfunction on $P'$ to $\phi'$ on $S$
would give that $\phi=\phi'$ but then $\phi$ vanishes on a pair
of simple closed geodesics intersecting at right angles and has to
have at least $4$ nodal domains. This contradicts Courant's nodal
domain theorem. Thus at least one of $P$ or $P'$ has no small Dirichlet
eigenvalue.
\end{rem}

\subsection{Prime Geodesic Theorem in pants\label{sec:PGT}}

To control the number of closed geodesics in each pair of pants, we
will make use of the following explicit prime geodesic theorem due
to Naud \cite{Na2005} with some suitable modifications. Let $P(\ell_{1},\ell_{2},\ell_{3})$
denote the pair of pants with geodesic boundary lengths $\ell_{1},\ell_{2},\ell_{3}$
where the boundary is a cusp if the length is zero. We write $N\left(P,T\right)$
to denote the number of primitive closed geodesics on $P$ of length
at most $T$ and $\delta\left(P\left(\ell_{1},\ell_{2},\ell_{3}\right)\right)$
be the critical exponent of $P\left(\ell_{1},\ell_{2},\ell_{3}\right)$.
\begin{thm}[\cite{Na2005}]
\label{thm:Naud-PGT}Let $A>B>0$ be given and let $P$ denote the
pair of pants with geodesic boundary lengths $\ell_{1},\ell_{2}\in[B,A]$
and a single cusp or a pair of pants with geodesic boundary length
$\ell_{1}\in[B,A]$ and two cusps. We have that
\[
N\left(P,T\right)=\mathrm{li}\left(e^{\delta(P)T}\right)+O\left(e^{\left(\frac{\delta(P)}{2}+\frac{1}{4}\right)T}\right),
\]
where the implied constant depends only on $A,B$.
\end{thm}

\begin{rem}
Here $\mathrm{li}(x)=\int^{x}_{0}\frac{\mathrm{d}t}{\log(t)}$ and
it satisfies the bounds 
\begin{equation}
\frac{e^{x}}{x}<\mathrm{li}\left(e^{x}\right)<\frac{e^{x}}{x}\left(1+\frac{2}{x}\right),\label{eq:log-int}
\end{equation}
for any $x\geqslant2$.
\end{rem}

\begin{proof}
In both cases, this is essentially \cite[Theorem 1.2]{Na2005} with
some small adaptations to make the error term uniform in $\ell_{1},\ell_{2}$
over the compact window. For the pants with two cusps, this was carried
out in \cite[Theorem 6.4]{Hi.Th.23}, and so we will give the details
for the case of one cusp. Let $P(\ell_{1},\ell_{2},0)$ be such a
pants with one cusp and two geodesic boundaries with lengths in $[B,A]$.

Note firstly that, for example by \cite[Theorem 3.1]{McM1998} \cite[Theorem 1, Theorem 5]{Po.Vy.17},
the Hausdorff dimension, and hence the rightmost resonance $\delta\left(P(\ell_{1},\ell_{2},0)\right)$
is continuous in $\ell_{1},\ell_{2}$. Moreover, we fix a topological
surface $\Sigma$ and can equip each $P(\ell_{1},\ell_{2},0)$ with
a marking $\varphi:\Sigma\to P(\ell_{1},\ell_{2},0)$ such that for
any free homotopy class of closed curve $[\alpha]$ on $\Sigma$,
the length $\ell_{[\alpha]}(P(\ell_{1},\ell_{2},0))$ of the geodesic
representative of $[\varphi(\alpha)]$ on $P(\ell_{1},\ell_{2},0)$
is continuous in $\ell_{1},\ell_{2}$. 

We now follow precisely the notation as in the proof of \cite[Theorem 1.2]{Na2005}.
Note that for fixed $X$ and $Y$ as in the proof, the number of closed
geodesics on $S_{\ell}$ can be bounded uniformly as $\ell_{1},\ell_{2}$
ranges over $[B,A]$ since the systole of these surfaces is uniformly
bounded below by $B$. It follows that in {[}ibid. Equation (2.19){]}
the geometric (right-hand) side of the formula is locally continuous
and hence continuous on $[B,A]$. Coupled with the continuity of the
first resonance, we see that the error term 

\[
\sum_{s\in\mathcal{R}_{S_{\ell}}\backslash\delta\left(P(\ell_{1},\ell_{2},0)\right)}\hat{g}(s)
\]
is also continuous (note that for us, $\mathcal{R}_{P(\ell_{1},\ell_{2},0)}=\mathcal{R}^{+}_{P(\ell_{1},\ell_{2},0)}\cup\left\{ \delta\left(P(\ell_{1},\ell_{2},0)\right)\right\} $
since by \cite{Ba.Ma.Mo17}, $\delta\left(P(\ell_{1},\ell_{2},0)\right)$
is the only resonance with $\textup{Re}\left(s\right)>\frac{1}{2}$).
The proof now continues identically up to {[}ibid. Equations (2.27)
and (2.28){]} where the bound on the error term is dependent on $\ell_{1},\ell_{2}$
in a pointwise manner of the form:
\[
\left|\sum_{s\in\mathcal{R}_{S_{\ell}}\backslash\delta_{0}(S_{\ell})}\hat{g}(s)\right|\leq C_{1}(\ell_{1},\ell_{2})+C_{2}(\ell_{1},\ell_{2})X^{a_{1}}Y^{b_{1}}+C_{3}(\ell_{1},\ell_{2})X^{c_{1}}Y^{d_{1}}.
\]
But, using the continuity of the left-hand side, the $C_{i}(\ell_{1},\ell_{2})$
on the right-hand side can be made uniform over all $\ell_{1},\ell_{2}\in[B,A]$.
 The proof then continues identically as in \cite[Theorem 1.2]{Na2005}
propagating these uniform bounds throughout the remainder.
\end{proof}

We will also need an analogous result for thrice punctured spheres
which is a consequence of the usual effective prime geodesic theorem
for finite-area geometrically finite hyperbolic surfaces and that
fact that the thrice punctured sphere has no non-trivial eigenvalues
below $\frac{1}{4}$.
\begin{thm}
\label{thm:thrice-punc-sphere}\cite[Theorem 1.1]{Na2005} Let $P$
be a thrice punctured sphere. Then, 
\[
N(P,T)=\mathrm{li}\left(e^{T}\right)+O\left(e^{\frac{3}{4}T}\right),
\]
with the implied constant universal.
\end{thm}

We also have the following lower bound on the critical exponents of
pants.
\begin{lem}
\label{lem:crit-exponent} For any $A>0$, there exists a constant
$\kappa(A)>0$ such that 
\[
\delta\left(P(\ell_{1},\ell_{2},0)\right)>\frac{1}{2}+\kappa(A),
\]
for any $\ell_{1},\ell_{2}\in[0,A]$.
\end{lem}

\begin{proof}
By e.g. \cite[Theorem 3.1]{McM1998} \cite[Theorem 1, Theorem 5]{Po.Vy.17},
$\delta\left(P\left(\ell_{1},\ell_{2},0\right)\right)$ is a continuous
function for $\ell_{1},\ell_{2}\in[0,A]$ so it attains its infimum
which is $>\frac{1}{2}$ by a result of Beardon \cite{Be.1968}.
\end{proof}

\section{Analytic Preliminaries}

\subsection{Selberg's trace formula\label{subsec:Selberg's-trace-formula}}

We recall Selberg's trace formula for a non-compact finite-area hyperbolic
surface which can for example be found in \cite[Theorem 10.2]{Iw2021}.
\begin{thm}
\label{thm:trace-formula}Let $X$ be a complete and oriented hyperbolic
surface with genus $g$ and $n$ cusps. Let 
\[
0=\lambda_{0}\leqslant\lambda_{1}\leqslant\lambda_{2}\leqslant\ldots
\]
denote the discrete spectrum of the (positive) Laplacian on $X$ and
for $j\geqslant0$, define
\[
r_{j}\eqdf\begin{cases}
\sqrt{\lambda_{j}-\frac{1}{4}} & \text{if }\lambda_{j}\geqslant\frac{1}{4},\\
i\sqrt{\frac{1}{4}-\lambda_{j}} & \text{if }\lambda_{j}<\frac{1}{4}.
\end{cases}
\]
Suppose that $f\in C^{\infty}_{c}(\mathbb{R})$ is even with Fourier
transform $h$, then
\begin{align}
\sum_{j\geqslant0}h(r_{j})+\frac{1}{4\pi} & \int^{\infty}_{-\infty}h(r)\frac{-\varphi'}{\varphi}\left(\frac{1}{2}+ir\right)\mathrm{d}r\nonumber \\
= & \frac{\mathrm{Vol}(X)}{4\pi}\int^{\infty}_{-\infty}h(r)r\tanh(\pi r)\mathrm{d}r\nonumber \\
 & +\sum_{\gamma\in\mathcal{P}\left(X\right)}\sum^{\infty}_{k=1}\frac{\ell(\gamma)f(k\ell(\gamma))}{2\sinh\left(\frac{k\ell(\gamma)}{2}\right)}+\frac{h(0)}{4}\mathrm{Tr}\left(I-\Phi\left(\frac{1}{2}\right)\right)\label{eq:trace-form}\\
 & -nf(0)\log(2)-\frac{n}{2\pi}\int^{\infty}_{-\infty}h(r)\psi(1+ir)\mathrm{\mathrm{d}}r,\nonumber 
\end{align}

where 
\begin{enumerate}
\item $\Phi(s)$ is the scattering matrix of $X$ at parameter $s$,
\item $\varphi(s)$ is the determinant of $\Phi(s)$,
\item $\psi(x)$ is the digamma function given by $\frac{\mathrm{d}}{\mathrm{d}x}\log\Gamma(x)$.
\end{enumerate}
\end{thm}

\noindent\textbf{Choice of function for the trace formula.}

\noindent We let $f_{1}\in C^{\infty}_{c}(\mathbb{R})$ be a real-valued,
even and non-negative function satisfying
\begin{enumerate}
\item $f_{1}$ is compactly supported on $[-1,1]$ and strictly positive
on $(-1,1)$,
\item $\hat{f_{1}}$ is non-negative on $\mathbb{R}\cup i\mathbb{R}$,
\item $\hat{f_{1}}$ is monotone decreasing on $[0,\infty)$.
\end{enumerate}
A test function satisfying the first two properties has previously
been used in several works studying $\lambda_{1}$ on hyperbolic surfaces
stemming from \cite{Ma.Na.Pu2022}. The existence of $f_{1}$ with
the additional property 3 is obtained for example in \cite{Tl2022}
where also there is control over the rate of decay of $\hat{f_{1}}$.
For completeness, we isolate and streamline the proof of the existence
of $f_{1}$. 
\begin{lem}
There exists a function $f_{1}$ with the above properties.
\end{lem}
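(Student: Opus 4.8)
The plan is to realise $f_{1}$ as the inverse Fourier transform of a carefully chosen infinite product of rescaled copies of one explicit building block. Put
\[
p(\xi)\eqdf\frac{6\,(\xi-\sin\xi)}{\xi^{3}}\quad(\xi\neq0),\qquad p(0)\eqdf1 ,
\]
so that $p$ is entire, and I would first isolate four features of $p$. (i) $p$ is entire of exponential type $1$, since $\xi-\sin\xi$ has type $1$ and division by $\xi^{3}$ does not change the type. (ii) $p$ is real, even, and strictly positive on $\mathbb{R}$ (numerator and denominator have the same sign), with $p(0)=1$ and, using (iii), $0\le p\le1$; moreover $p(ir)=6(\sinh r-r)/r^{3}>0$ for $r>0$, so $p\ge0$ on $\mathbb{R}\cup i\mathbb{R}$. (iii) $p$ is strictly decreasing on $[0,\infty)$: one has $p'(\xi)=6\xi^{-4}q(\xi)$ with $q(\xi)\eqdf-2\xi-\xi\cos\xi+3\sin\xi$, and since $q(0)=q'(0)=q''(0)=0$ and $q'''(\xi)=-\xi\sin\xi<0$ on $(0,\pi)$, one gets $q<0$ on $(0,\pi]$, while for $\xi>3$ the crude bound $q(\xi)\le-2\xi+\xi+3=3-\xi<0$ finishes the claim since $\pi>3$. (iv) $p$ is positive-definite: the elementary integral $\int_{-1}^{1}(1-|t|)^{2}\cos(t\xi)\,\mathrm{d}t=\tfrac{4(\xi-\sin\xi)}{\xi^{3}}$ shows that $p=\tfrac32\,\widehat{\phi_{0}}$ with $\phi_{0}(t)=(1-|t|)^{2}\ind_{[-1,1]}(t)\ge0$, so $p$ is the Fourier transform of a nonnegative $L^{1}$ function supported in $[-1,1]$; in particular its inverse transform is nonnegative.

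Next I would fix a sequence $c_{j}\to\infty$ with $\sum_{j\ge1}c_{j}^{-1}\le1$, growing quickly --- say $c_{j}=2^{j}$ --- and set
\[
G(\xi)\eqdf\prod_{j\ge1}p\!\left(\frac{\xi}{c_{j}}\right),\qquad f_{1}\eqdf\check G .
\]
Because $p(w)=1-\tfrac{w^{2}}{20}+O(w^{4})$ near $0$ and $0\le p\le1$ on $\mathbb{R}$, the partial products converge locally uniformly on $\mathbb{C}$, so $G$ is entire of exponential type at most $\sum_{j}c_{j}^{-1}\le1$; hence $f_{1}=\check G$ is supported in $[-1,1]$ by Paley--Wiener, which is property $1$. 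Real- and even-ness of $f_{1}$, and nonnegativity of $G=\hat f_{1}$ on $\mathbb{R}\cup i\mathbb{R}$ (property $2$), pass to the product factorwise from (ii). Each factor is nonnegative and decreasing on $[0,\infty)$, a product of such functions is again decreasing ($(ab)'=a'b+ab'\le0$), hence so is every partial product and therefore so is the limit $G=\hat f_{1}$; this is property $3$. Each factor is positive-definite by (iv), a finite product of positive-definite functions is positive-definite, and the partial products converge to $G$ in $L^{1}(\mathbb{R})$ (being dominated by $p(\cdot/c_{1})\in L^{1}$), so $f_{1}=\check G$ is a uniform limit of nonnegative functions and thus $f_{1}\ge0$. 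Finally, with $c_{j}=2^{j}$ the factors with $c_{j}\le|\xi|^{1/2}$ each satisfy $p(\xi/c_{j})\le12/|\xi|$, so their product is at most $(12/|\xi|)^{\lfloor\frac12\log_{2}|\xi|\rfloor}=|\xi|^{-\frac12\log_{2}|\xi|+O(1)}$, while the remaining factors are $\le1$; hence $G$ decays faster than any polynomial, so $f_{1}=\check G\in C^{\infty}$, and this also gives the quantitative decay of $\hat f_{1}$ mentioned after the statement.

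The only genuinely delicate inputs, I expect, are the two elementary lemmas about the building block --- the monotonicity of $p$ on $[0,\infty)$ and its positive-definiteness --- together with the structural remark that exponential type $\le1$, nonnegativity on $\mathbb{R}\cup i\mathbb{R}$, decrease on $[0,\infty)$, and positive-definiteness are all preserved under the infinite product. It is this stability that upgrades the merely continuous model function $(1-|t|)^{2}\ind_{[-1,1]}$, whose Fourier transform is proportional to $p$, to a genuine element of $C_{c}^{\infty}([-1,1])$ with all the required properties.
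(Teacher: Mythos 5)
Your construction is correct, and it takes a genuinely different route from the paper's. The paper proceeds ``additively'': it starts from $\varphi*\varphi$ with $\varphi\in C_c^\infty([-\tfrac12,\tfrac12])$ nonnegative, forms $\hat\psi(\xi)=-\xi\hat\varphi(\xi)^2$, and defines $\hat f$ as the antiderivative $-\int_x^\infty\hat\psi$, so that monotone decrease on $[0,\infty)$ holds by construction; nonnegativity of the test function is then recovered by squaring, $f_1=f^2$, after which monotonicity of $\hat f_1=\hat f*\hat f$ must be re-verified by a convolution computation. You instead proceed ``multiplicatively'': your building block $p=\tfrac32\,\widehat{\phi_0}$ with $\phi_0(t)=(1-|t|)^2\ind_{[-1,1]}(t)$ already carries all four features (type $1$, nonnegativity on $\mathbb{R}\cup i\mathbb{R}$, monotone decrease on $[0,\infty)$, positive-definiteness), each stable under products, and the infinite product of dilates $p(\cdot/2^j)$ supplies the missing smoothness through superpolynomial decay --- an Ingham/Denjoy--Carleman-type construction. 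Your two elementary lemmas check out: the identity $\int_{-1}^1(1-|t|)^2\cos(t\xi)\,\mathrm{d}t=4(\xi-\sin\xi)/\xi^3$ is correct, and the monotonicity argument is sound since $q(0)=q'(0)=q''(0)=0$ with $q'''(\xi)=-\xi\sin\xi<0$ on $(0,\pi)$ forces $q<0$ on $(0,\pi]$, the crude bound covers $\xi>3$, and $\pi>3$. What your approach buys is complete explicitness (the paper's $f_1$ depends on an unspecified bump) together with the quantitative decay $|\hat f_1(\xi)|\le|\xi|^{-\frac12\log_2|\xi|+O(1)}$, which the paper only imports from \cite{Tl2022}. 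One line is worth adding: local uniform convergence alone does not give the exponential type of the infinite product --- you need a bound uniform in $j$ over all of $\mathbb{C}$ --- but this is immediate from your item (iv), since
\[
|p(w)|\le\tfrac32\int_{-1}^1(1-|t|)^2e^{|t|\,|\mathrm{Im}(w)|}\,\mathrm{d}t\le e^{|\mathrm{Im}(w)|}
\]
(using $\tfrac32\|\phi_0\|_{1}=1$), whence $|G(\xi)|\le e^{|\mathrm{Im}(\xi)|\sum_j c_j^{-1}}\le e^{|\mathrm{Im}(\xi)|}$. Alternatively, both the support in $[-1,1]$ and the nonnegativity of $f_1$ follow at once, without Paley--Wiener, by writing each partial product's inverse transform as a convolution of nonnegative functions supported in $[-c_j^{-1},c_j^{-1}]$ and passing to the uniform limit, which your $L^1$-domination argument already justifies.
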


\begin{proof}
Choose a function $\varphi\in C^{\infty}_{c}(\mathbb{R})$ that is
even, non-negative, compactly supported on $[-\frac{1}{2},\frac{1}{2}]$,
strictly positive on $(-\frac{1}{2},\frac{1}{2})$ and $\varphi'(x)<0$
on $(0,\frac{1}{2})$ -- for example choose a standard bump function.
Then, $\varphi*\varphi\in C^{\infty}_{c}(\mathbb{R})$ is even, non-negative
and compactly-supported on $[-1,1]$ with Fourier transform that is
even and non-negative on $\mathbb{R}\cup i\mathbb{R}$. 

Next define $\psi(x)=i(\varphi*\varphi)'(x)$ so that by properties
of the Fourier transform, $\hat{\psi}(\xi)=-\xi\hat{\varphi}(\xi)^{2}$.
It follows that $\hat{\psi}$ is odd and non-positive on $[0,\infty)$.
Since $\varphi*\varphi$ is smooth and compactly supported on $[-1,1]$,
Paley-Wiener theory allows $\hat{\psi}$ to be extended to an entire
function such that for all $N\in\mathbb{N}$ there exists $C_{N}>0$
with 
\begin{equation}
|\hat{\psi}(z)|\leqslant C_{N}\frac{e^{|\mathrm{Im}(z)|}}{(1+|z|)^{N}}.\label{eq:pw-bd}
\end{equation}
Now set 
\[
I=-\int^{\infty}_{0}\hat{\psi}(t)\mathrm{d}t
\]
and
\[
g(\xi)=I+\int_{\gamma}\hat{\psi}(z)\mathrm{d}z,
\]
where $\gamma$ is the contour formed of the union of the straight
line $\gamma_{1}$ from $0$ to $\mathrm{Re}(\xi)$ and the straight
line $\gamma_{2}$ from $\mathrm{Re}(\xi)$ to $\xi$, so that $g$
is entire and by the fundamental theorem of contour integration, $g'(\xi)=\hat{\psi}(\xi)$.
If $x\in\mathbb{R}$ then 

\[
g(x)=-\int^{\infty}_{x}\hat{\psi}(t)\mathrm{d}t,
\]
and since $\hat{\psi}|_{\mathbb{R}}$ is a Schwartz function we immediately
see that $g|_{\mathbb{R}}$ is also a Schwartz function. Using trivial
estimates of the integral of $\hat{\psi}$ over the contour $\gamma$
arising from (\ref{eq:pw-bd}) we also obtain the estimate

\[
|g(\xi)|\leqslant C_{2}(1+|\xi|)e^{|\mathrm{Im}(\xi)|}.
\]
Since $g$ is entire, the Paley-Wiener-Schwartz theorem tells us that
$g$ is the Fourier(-Laplace) transform of a distribution $f$ with
support in $[-1,1]$. But since $g|_{\mathbb{R}}$ is a Schwartz function
it follows also that $f$ must be a Schwartz function. 

In conclusion, we have constructed a function $f\in C^{\infty}_{c}(\mathbb{R})$
that is compactly supported in $[-1,1]$, that is even and real-valued
and since $\hat{f}'(x)=g'(x)=\hat{\psi}(x)\leqslant0$ on $[0,\infty)$,
its Fourier transform is monotone decreasing on $[0,\infty)$. The
evenness of $f$ can be verified by
\[
f(x)=\check{\hat{f}}(x)=\int_{\mathbb{R}}\hat{f}(r)e^{irx}\mathrm{d}r=\int_{\mathbb{R}}\hat{f}(r)e^{-irx}\mathrm{d}r=f(-x),
\]
where the third equality is due to evenness of $\hat{f}=g$ which
follows from oddness of $\hat{\psi}$. The fact that $f$ is real-valued
on $\mathbb{R}$ is immediate because $\hat{f}$ is real and even
so that

\[
f(x)=2\int^{\infty}_{0}\hat{f}(r)\cos(rx)\mathrm{d}r\in\mathbb{R}.
\]
The fact that $\hat{f}\geqslant0$ on $\mathbb{R}\cup i\mathbb{R}$
is also immediate. Indeed, if $x\geqslant0$ 

\[
\hat{f}(x)=g(x)=-\int^{\infty}_{x}\hat{\psi}(t)\mathrm{d}t\geqslant0,
\]
as $\hat{\psi}$ is non-positive on $[0,\infty)$, and evenness of
$g$ gives the conclusion for $x<0$. For $x=it$ with $t\geqslant0$
then because $\hat{\psi}(\xi)=-\xi\hat{\varphi}(\xi)^{2}$ and $\hat{\varphi}(\xi)^{2}\geqslant0$
on $\mathbb{R}\cup i\mathbb{R}$, we have
\[
\hat{f}(x)=I+i\int^{t}_{0}\hat{\psi}(is)\mathrm{d}s=I+\int^{t}_{0}s\hat{\varphi}(is)^{2}\mathrm{d}s\geqslant0
\]
again as $\hat{\psi}$ is non-positive on $[0,\infty)$, and similarly
for $t<0$. If we can further show that $f$ is strictly positive
on $(-1,1)$ then $f_{1}=f$ will satisfy all of the desired properties
of the lemma. To this end, by properties of the Fourier transform
and definition of $f$ and $\psi$, we have 
\[
\widehat{xf}(\xi)=i\hat{f}'(\xi)=i\hat{\psi}(\xi)=-\widehat{(\varphi*\varphi)'}(\xi).
\]
Taking inverse Fourier transforms results in

\[
f(x)=-\frac{(\varphi*\varphi)'(x)}{x},
\]
for $x\neq0$. At $x=0$ we can compute using evenness of $\varphi$

\begin{align*}
(\varphi*\varphi)''(0) & =\int_{\mathbb{R}}\varphi''(t)\varphi(t)\mathrm{d}t=-\int_{\mathbb{R}}\varphi'(t)^{2}\mathrm{dt}<0,
\end{align*}
and since $f(0)=-(\varphi*\varphi)''(0)$ we obtain the claim at $x=0$.
For $0<x<1$, we use integration by parts and the support and evenness
of $\varphi$ to write

\begin{align*}
(\varphi*\varphi)'(x) & =\int_{\mathbb{R}}\varphi(x-t)\varphi'(t)\mathrm{d}t=\int^{\frac{1}{2}}_{0}(\varphi(x-t)-\varphi(x+t))\varphi'(t)\mathrm{d}t.
\end{align*}
But, since $\varphi$ is even and strictly decreasing on $(0,\infty)$
and because $|x-t|<x+t$ for $x,t>0$ we have $\varphi(x-t)=\varphi(|x-t|)>\varphi(x+t)$.
Thus as $\varphi'(t)<0$ on $(0,\frac{1}{2})$ we conclude that $(\varphi*\varphi)'(x)<0$
for $0<x<1$ and it follows that $f(x)>0$ for $0<x<1$. Combining
this with evenness of $f$ results in the desired conclusion.
\end{proof}

\noindent For $T>0,$ we set

\begin{equation}
f_{T}(x)\eqdf f_{1}\left(\frac{x}{T}\right),\label{eq:fn-choice}
\end{equation}
and denote by $h_{T}$ the Fourier transform of $f_{T}$. In particular, 

\begin{equation}
h_{T}(r)=Th_{1}(Tr).\label{eq:h_t}
\end{equation}

\subsection{Scattering matrix}

We briefly recall some relevant information about the scattering matrix
$\Phi\left(s\right)$, where we follow \cite{Iw2021}.

Let $X=\Gamma_{X}\backslash\mathbb{H}$ where $\Gamma_{X}<\text{PSL}_{2}\left(\mathbb{R}\right)$.
We write $\mathcal{F}$ to denote a Dirichlet fundamental domain for
$\Gamma_{X}$. Since $\mathcal{F}$ is a non-compact polygon, it has
some of its vertices on $\mathbb{R}\cup\infty$ in $\mathbb{H\cup\partial}\mathbb{H}$
which we call cuspidal vertices. By e.g. \cite[Proposition 2.4]{Iw2021},
we can ensure that the cuspidal vertices are distinct modulo $\Gamma_{X}$.
The sides of $\mathcal{F}$ can be arranged in pairs so that the side
pairing motions generate $\Gamma_{X}$. The two sides of $\mathcal{F}$
meeting at a cuspidal vertex have to be pairs since the cuspidal vertices
are distinct modulo $\Gamma_{X}$. The side-pairing motion has to
fix the vertex and is therefore a parabolic element of $\Gamma_{X}$.
This gives rise to a cusp in the quotient $\Gamma_{X}\backslash\mathbb{H}$
and each cuspidal vertex corresponds to a unique cusp in this way.
We label the cuspidal vertices by $\mathfrak{a}_{1},...,\mathfrak{a}_{n}\in\text{Cusp}\left(X\right)$.
We denote the stabilizer subgroup of the vertex $\mathfrak{a}_{i}$
by
\[
\Gamma_{\mathfrak{a}_{i}}\stackrel{\text{def}}{=}\{\gamma\in\Gamma_{X}:\gamma\mathfrak{a}_{i}=\mathfrak{a}_{i}\}.
\]
Each $\Gamma_{\mathfrak{a}_{i}}$ is an infinite cyclic group generated
by the parabolic element $\gamma_{\mathfrak{a}_{i}}$, which is the
side-pairing motion at the vertex $\mathfrak{a}_{i}$. There exists
$\sigma_{\mathfrak{a}_{i}}\in\text{SL}_{2}\left(\mathbb{R}\right)$
such that 
\[
\sigma^{-1}_{\mathfrak{a}_{i}}\gamma_{\mathfrak{a}_{i}}\sigma_{\mathfrak{a}_{i}}=\begin{pmatrix}1 & 1\\
0 & 1
\end{pmatrix}.
\]
 $\sigma_{\mathfrak{a}_{i}}$ is determined up to right multiplication
by a translation. We choose $\sigma_{\mathfrak{a}_{i}}$ so that for
each $l\geqslant1$, the semi-strip 
\[
P\left(l\right)\stackrel{\text{def}}{=}\{z\in\mathbb{H}:0<x<1,y\geqslant l\},
\]
is mapped into $\mathcal{F}$ by $\sigma_{\mathfrak{a}_{i}}$. 

For $\mathfrak{a},\mathfrak{b}\in\text{Cusp}\left(X\right)$, we define
\[
\mathcal{C}_{\mathfrak{a}\mathfrak{b}}\eqdf\left\{ c>0:\exists\left(\begin{array}{cc}
* & *\\
c & *
\end{array}\right)\in\sigma^{-1}_{\mathfrak{a}}\Gamma\sigma_{\mathfrak{b}}\right\} .
\]
and the Kloosterman sum
\[
\mathcal{S}_{\mathfrak{a}\mathfrak{b}}\left(0,0,c\right)\eqdf\left|\left\{ d\mod c:\exists\left(\begin{array}{cc}
* & *\\
c & d
\end{array}\right)\in\sigma^{-1}_{\mathfrak{a}}\Gamma\sigma_{\mathfrak{b}}\right\} \right|.
\]
For $\text{Re}(s)>1$, the scattering matrix $\Phi(s)=\left(\varphi_{\mathfrak{a}\mathfrak{b}}(s)\right)_{\mathfrak{a,\mathfrak{b}}\in\text{Cusp}\left(X\right)}$
is given by a Dirichlet series 
\begin{equation}
\varphi_{\mathfrak{a}\mathfrak{b}}(s)=\sqrt{\pi}\frac{\Gamma\left(s-\frac{1}{2}\right)}{\Gamma\left(s\right)}\sum_{c>0}\mathcal{S}_{\mathfrak{a}\mathfrak{b}}\left(0,0,c\right)c^{-2s},\label{eq:dirichlet-matrix}
\end{equation}
where $c$ runs over the $c>0$ appearing in $\mathcal{C}_{\mathfrak{a}\mathfrak{b}}$.
The scattering determinant $\varphi(s)$ for $\text{Re}(s)>1$ is
given by a Dirichlet series
\begin{equation}
\varphi(s)=\left(\sqrt{\pi}\frac{\Gamma\left(s-\frac{1}{2}\right)}{\Gamma\left(s\right)}\right)^{n}\sum^{\infty}_{n=1}a_{n}b^{-2s}_{n}\label{eq:Dirichlet-determinant}
\end{equation}
with $a_{1}\neq0$ and $0<b_{1}<b_{2}<\dots<b_{n}\to\infty$ and this
converges absolutely for $\mathrm{Re}(s)\geq1+\varepsilon$ \cite[Page 160]{Iw2021}.

We record a trivial lower bound for the coefficient $b_{1}$.
\begin{lem}
\label{lem:dirichlet-coef}With notation as above, $b_{1}\geqslant1$.
\end{lem}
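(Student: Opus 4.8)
The plan is to show that $b_1$, the smallest frequency appearing in the Dirichlet series expansion \eqref{eq:Dirichlet-determinant} of the scattering determinant $\varphi(s)$, is at least $1$, by relating the $b_n$ to the moduli $c$ occurring in the matrix entries \eqref{eq:dirichlet-matrix} and then giving a uniform lower bound $c\geqslant 1$ for all such $c$. First I would recall that the full scattering matrix $\Phi(s)=(\varphi_{\mathfrak a\mathfrak b}(s))$ has entries that are Dirichlet series in the quantities $c\in\mathcal C_{\mathfrak a\mathfrak b}$, and that $\varphi(s)=\det\Phi(s)$; expanding the determinant as a signed sum of products of entries, every exponential factor $b_n^{-2s}$ that survives is a product of factors $c^{-2s}$ with $c\in\mathcal C_{\mathfrak a\mathfrak b}$ for various pairs of cusps, so it suffices to show $c\geqslant 1$ for every $\mathfrak a,\mathfrak b$ and every $c\in\mathcal C_{\mathfrak a\mathfrak b}$ (and to handle the Gamma prefactor, which contributes no Dirichlet-series frequency).

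The heart of the matter is the geometric lower bound $c\geqslant 1$. By definition $c\in\mathcal C_{\mathfrak a\mathfrak b}$ means there is an element $\gamma=\left(\begin{smallmatrix}*&*\\ c&*\end{smallmatrix}\right)\in\sigma_{\mathfrak a}^{-1}\Gamma_X\sigma_{\mathfrak b}$ with lower-left entry $c>0$. The key observation is that $\gamma$ conjugates the standard parabolic $\left(\begin{smallmatrix}1&1\\0&1\end{smallmatrix}\right)$ (the normalized generator at $\mathfrak b$) to a parabolic element of $\sigma_{\mathfrak a}^{-1}\Gamma_X\sigma_{\mathfrak a}$ fixing the point $\gamma(\infty)=a/c$ on the real line, and one computes directly that the resulting parabolic has the form $\left(\begin{smallmatrix}1-ac & a^2\\ -c^2 & 1+ac\end{smallmatrix}\right)$ or similar — in any case a parabolic whose "width", i.e. the reciprocal of the relevant horocyclic normalization, is governed by $c^2$. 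The cleanest route is to use the disjointness of the cusp neighbourhoods: the choice of scaling matrices $\sigma_{\mathfrak a_i}$ was made so that the half-strips $P(1)=\{0<x<1,\ y\geqslant 1\}$ embed into the fundamental domain, which forces the images of the horoballs $\{y>1\}$ at distinct cusps to be disjoint (or identical) in $\mathbb H$. Standard hyperbolic geometry (the "$\geqslant 1$ gap", cf. Iwaniec or Shimura) then yields that any $\gamma\in\sigma_{\mathfrak a}^{-1}\Gamma_X\sigma_{\mathfrak b}$ not fixing $\infty$ must have $|c|\geqslant 1$: otherwise the horoball $\{y>1\}$ at $\mathfrak b$ would be moved by $\gamma$ to a horoball of Euclidean diameter $1/c^2>1$ based on $\mathbb R$, which would overlap the horoball $\{y>1\}$ at $\mathfrak a$, contradicting embeddedness of the cusp regions. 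Taking the smallest such $c$ over all contributing pairs gives $b_1\geqslant 1$.

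The main obstacle I anticipate is bookkeeping rather than conceptual: one must be careful that the frequency $b_1$ of $\varphi(s)$ really is bounded below by the minimal $c$ over the $\mathcal C_{\mathfrak a\mathfrak b}$ and is not, say, an artifact of cancellation producing a spurious smaller exponent, and one must correctly separate the meromorphic Gamma-factor $\sqrt\pi\,\Gamma(s-\tfrac12)/\Gamma(s)$ from the Dirichlet series part (this factor is exactly the contribution of the "$c=0$"/diagonal terms and carries no $b_n^{-2s}$). Since the problem only asks for the crude bound $b_1\geqslant 1$ — not its sharp value — it is in fact enough to note that \emph{every} frequency $b_n$ is a finite product of moduli $c\geqslant 1$, hence $b_n\geqslant 1$, and in particular $b_1\geqslant 1$; the disjoint-horoball argument above supplies the single ingredient $c\geqslant 1$ that makes this work.
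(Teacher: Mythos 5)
Your proposal is correct and follows the same overall strategy as the paper: reduce $b_1\geqslant 1$ to the statement $\min\mathcal C_{\mathfrak a\mathfrak b}\geqslant 1$ for all pairs of cusps, and then read this off from the fact that the scaled cuspidal regions embed disjointly. The only difference is in how that geometric input is extracted. The paper first invokes the intermediate inequality $\min\mathcal C_{\mathfrak a\mathfrak b}\geqslant\sqrt{\min\{c_{\mathfrak a},c_{\mathfrak b}\}}$ from Iwaniec and then bounds the diagonal quantity $c_{\mathfrak a}$ via an isometric-circle argument inside a fundamental domain for $\sigma_{\mathfrak a}^{-1}\Gamma_X\sigma_{\mathfrak a}$; you instead argue directly that any $\gamma\in\sigma_{\mathfrak a}^{-1}\Gamma_X\sigma_{\mathfrak b}$ with lower-left entry $0<c<1$ would carry the horoball $\{y>1\}$ to a horoball of Euclidean diameter $1/c^2>1$, forcing it to meet $\{y>1\}$ and contradicting disjointness (or embeddedness, when $\mathfrak a=\mathfrak b$ and $c\neq 0$) of the cusp neighbourhoods. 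Your route is slightly more economical since it treats all pairs $(\mathfrak a,\mathfrak b)$ uniformly and avoids the square-root loss, but the two proofs are morally identical — both are the standard "$\geqslant 1$ horoball gap" argument — and both close in the same way via the determinant expansion of $\varphi(s)$ into products of the matrix entries, with the $\Gamma$-prefactor contributing no Dirichlet frequencies.
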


\begin{proof}
We first claim that for each $\mathfrak{a},\mathfrak{b}$, we have
$\min\mathcal{C}_{\mathfrak{a}\mathfrak{b}}\geqslant1$. By e.g. \cite[Page 50]{Iw2021},
we have that $\min\mathcal{C}_{\mathfrak{a}\mathfrak{b}}\geqslant\sqrt{\mathrm{max}\left\{ c_{\mathfrak{a}},c_{\mathfrak{b}}\right\} }$
where
\begin{equation}
c_{\mathfrak{a}}=\min\mathcal{C}_{\mathfrak{a}\mathfrak{a}}=\min\left\{ c>0:\left(\begin{array}{cc}
* & *\\
c & *
\end{array}\right)\in\sigma^{-1}_{\mathfrak{a}}\Gamma\sigma_{\mathfrak{a}}\right\} .\label{eq:def-c_a}
\end{equation}
Given 
\[
g=\left(\begin{array}{cc}
a & b\\
c & d
\end{array}\right)\in\sigma^{-1}_{\mathfrak{a}}\Gamma\sigma_{\mathfrak{a}},
\]
the isometric circle $C_{g}$ is the locus of points in $\mathbb{C}$
on which $g$ acts as a Euclidean isometry, i.e. the set of $z\in\mathbb{C}$
for which $\left|cz+d\right|=1$ which is a circle of radius $\frac{1}{|c|}$
centered at $-\frac{d}{c}$. Let 
\[
F_{\infty}\eqdf\left\{ x+iy\in\mathbb{H}:0<x<1\right\} 
\]
which is a fundamental domain for $\Gamma_{\infty}\eqdf\langle z\mapsto z+1\rangle$
and then define 
\[
F\eqdf\left\{ z\in F_{\infty}:\text{Im}z>\text{Im}\gamma z\text{ for all }\gamma\in\sigma^{-1}_{\mathfrak{a}}\Gamma\sigma_{\mathfrak{a}}\backslash\Gamma_{\infty}\right\} ,
\]
the region of $F_{\infty}$ exterior to the isometric circles $C_{g}$
with $\gamma\in\sigma^{-1}_{\mathfrak{a}}\Gamma\sigma_{\mathfrak{a}}\backslash\Gamma_{\infty}$.
By definition (\ref{eq:def-c_a}), $\frac{1}{c_{\mathfrak{a}}}$ is
precisely the radius of the largest isometric circle $C_{g}$ with
$\gamma\in\sigma^{-1}_{\mathfrak{a}}\Gamma\sigma_{\mathfrak{a}}\backslash\Gamma_{\infty}$
. Since $F$ is a fundamental domain for $\sigma^{-1}_{\mathfrak{a}}\Gamma\sigma_{\mathfrak{a}}$,
it contains the semi-strip $\left\{ x+iy\in\mathbb{H}\mid0<x<1,y>1\right\} $
so we have 
\[
\frac{1}{c_{\mathfrak{a}}}\leqslant1
\]
which implies that $\min\mathcal{C}_{\mathfrak{a}\mathfrak{b}}\geqslant1$.
The lemma follows from (\ref{eq:dirichlet-matrix}) and (\ref{eq:Dirichlet-determinant})
since we can write 
\[
\varphi(s)=\sum_{\sigma\in S_{n}}\text{sgn}(\sigma)\prod^{n}_{i=1}\varphi_{\mathfrak{a}_{i},\mathfrak{a}_{\sigma(i)}}(s).
\]
\end{proof}

\section{Eigenvalue comparison\label{sec:Eigenvalue-comparison}}

The main idea for the proof of Theorem \ref{thm:main-thm} will
be to exploit the existence of the many pants with fairly short boundary
guaranteed by Lemma \ref{thm:many-pants}. We aim to use the fact
that they contribute many closed geodesics to the geometric side of
the trace formula for the surface $X$. The way we obtain these geodesics
will be through Theorem \ref{thm:Naud-PGT}. However, we can only
use this result if the pants boundaries live in some fixed compact
interval away from zero. 

For the pants obtained in Lemma \ref{thm:many-pants}, this need not
be the case; their lengths could be arbitrarily close to zero. To
overcome this, we will show that the geometry and spectral behaviour
of the small eigenvalues of $X$ are close to those on the surface
obtained by collapsing down to nodes, all of the closed geodesics
shorter than some threshold. In doing so, this produces two cusps
in place of the geodesic. This will also collapse the boundaries of
the pants that we obtained to cusps if they were too short, and it
turns out that these pants are much more well behaved.

In this section, we will prove that this collapsing can be done in
a controlled manner, so it does not affect the geometry of the rest
of the surface too much (Proposition \ref{prop:bi-lips-map} and Lemma
\ref{lem:curves-on-limit-surface}). We will then prove that we can
pass small eigenvalues from the collapsed surface to the original
surface (Proposition \ref{prop:passing-evalues}). A version of the
idea to consider this collapsed surface appears in the work of Colbois
and Courtois \cite{Co.Co.1989}. 

\subsection{Collapsing short geodesics}

We first recall some basics about collars in hyperbolic surfaces. 

Around every simple closed geodesic $\gamma$ with length $\ell(\gamma)$
there is a neighbourhood that is isometric to a hyperbolic cylinder
\[
\mathcal{C}^{t}_{\gamma}=\left\{ (r,\theta):\ell(\gamma)\cosh(r)<t,0\leqslant\theta\leqslant2\pi\right\} ,
\]
with metric
\[
\mathrm{d}s^{2}=\mathrm{d}r^{2}+\left(\frac{\ell(\gamma)}{2\pi}\right)^{2}\cosh^{2}(r)\mathrm{d}\theta^{2}.
\]
By the collar theorem, $\mathcal{C}^{t}_{\gamma}$ is embedded for
any 
\[
t\leqslant\ell(\gamma)\cosh\left(\mathrm{arcsinh}\left(\sinh\left(\frac{\ell(\gamma)}{2}\right)^{-1}\right)\right)
\]
 but possibly larger, and for different $\gamma$ they are disjoint.
The collar $\mathcal{C}^{t}_{\gamma}$ has width $w(\ell)=\mathrm{arcosh}\left(\frac{t}{\ell(\gamma)}\right)$
and its boundary curves have length $t$.

Around every cusp $\mathfrak{c}$, there is a horocycle neighbourhood
isometric to a cusp domain $\mathcal{H}^{t}_{\mathfrak{c}}$ parametrised
as

\[
\mathcal{H}^{t}_{\mathfrak{c}}=\left\{ (r,\theta):r\geqslant0,0\leqslant\theta\leqslant2\pi\right\} ,
\]
with metric

\[
\mathrm{d}s^{2}=\mathrm{d}r^{2}+\left(\frac{t}{2\pi}\right)^{2}e^{-2r}\mathrm{d}\theta^{2}.
\]
In this parametrisation, the cusp is represented by the point at infinity
and the horocycle of length $t$ is at $r=0$. The neighbourhood embeds
for any $t\leqslant1$, but possibly larger, and are disjoint for
distinct cusps.

Let 
\[
\mathcal{H}_{\mathfrak{c}}(a,b)=\left\{ (r,\theta)\in\mathcal{H}^{1}_{\mathfrak{c}}:a\leqslant r\leqslant b,0\leqslant\theta\leqslant2\pi\right\} .
\]
 In particular, $\mathcal{H}_{\mathfrak{c}}(b,\infty)$ is the horocycle
neighbourhood around the cusp bounded by the horocycle of length $e^{-b}$.

Let $X^{(L)}_{\infty}$ denote the surface obtained from $X$ by pinching
all closed geodesics of length less than $L<1$ down to nodes, producing
two cusps in their place. The surface $X^{(L)}_{\infty}$ is possibly
disconnected. For $\gamma$ a closed geodesic on $X$ let 
\[
\beta_{\gamma}=\ell(\gamma)\cosh\left(\frac{1}{2}\mathrm{arcosh}\left(\frac{2}{\ell(\gamma)}\right)^{\frac{1}{4}}\right).
\]
It was established in \cite[Proposition 3.1]{Co.Co.1989} that there
exists a bi-Lipschitz homeomorphism
\begin{align*}
q & :\tilde{X}^{(L)}\eqdf X\setminus\bigcup_{\gamma}\mathcal{C}^{\beta_{\gamma}}_{\gamma}\to\tilde{X}^{(L)}_{\infty}\eqdf X^{(L)}_{\infty}\setminus\bigcup_{\gamma}\mathcal{H}_{\mathfrak{c}^{(1)}_{\gamma}}(-\log\beta_{\gamma},\infty)\cup\mathcal{H}_{\mathfrak{c}^{(2)}_{\gamma}}(-\log\beta_{\gamma},\infty)
\end{align*}
where the union runs over all closed geodesics of length less than
$L$ in the first surface and in the second surface, it runs over
the cusps $\mathfrak{c}^{(1)}_{\gamma}$ and $\mathfrak{c}^{(2)}_{\gamma}$
arising from pinching down the geodesics $\gamma$ to nodes, for $L$
sufficiently small. By bi-Lipschitz homeomorphism we mean a homeomorphism
satisfying for all $x,y$,
\[
\frac{1}{K[q]}d(x,y)\leq d(q(x),q(y))\leq K[q]d(x,y),
\]
for some $K[q]>0$ called the distortion factor (in \cite{Co.Co.1989}
or \cite[Definition 3.2.3]{Bu2010} this is called a quasi-isometry)
. 

In \cite[Proposition 3.1]{Co.Co.1989} it is also shown that $K[q]\to1$
as we choose $L\to0$. Since we pinch the geodesics down whose lengths
are shorter than some fixed threshold, we will want to prove that
$q$ can be constructed with an explicit upper bound on $K[q]$. To
this end, we will use a result of Buser, Makover, Muetzel and Silhol
\cite{Bu.Ma.Mu.Si2014} about constructing an explicit map in pairs
of pants to degenerate a boundary to a cusp with control over the
bi-Lipschitz constant.

Let $\gamma$ be a geodesic of length $0<\ell\leqslant2$ and define
$t_{\ell}=1+\frac{\ell^{2}}{4}$. Let $\mathcal{C}_{\ell}$ denote
the one-sided collar of width $w_{\ell}=\log\frac{2}{\ell}$ around
$\gamma$ which is non-empty when $\ell\leqslant2$ as this is when
$w_{\ell}$ is non-negative. It has boundary length $t_{\ell}$. For
a cusp, let $\mathcal{C}_{0}=\mathcal{H}^{1}$ be the horocycle of
length $1$. For $a,b,c\geqslant0$ denote by $Y_{a,b,c}$ the pair
of pants with boundary lengths $a,b,c$ with length zero being interpreted
as a cusp. 
\begin{lem}
\label{lem:buser-lemma}\cite[Theorem 5.1]{Bu.Ma.Mu.Si2014} Let $\ell\leqslant\frac{1}{2}$.
There is a quasiconformal homeomorphism 
\[
\phi:Y_{a,b,\ell}\to Y_{a,b,0}\setminus\mathcal{H}^{\frac{2}{\pi}\ell}
\]
where $\mathcal{H}^{t}$ is the horocycle neighborhood of the new
cusp bounded by the horocycle of length $t$. The map has the following
properties:
\begin{enumerate}
\item $\phi$ maps the boundaries of length $a$ and $b$ in $Y_{a,b,\ell}$
to the boundaries of length $a$ and $b$ in $Y_{a,b,0}$ and it maps
the boundary of length $\ell$ to the horocycle of length $\frac{2}{\pi}\ell$.
$\phi$ also preserves the boundary constant speed parametrisation
and has a quasiconformal dilation of at most $1+2\ell^{2}$.
\item $\phi$ restricted to $Y_{a,b,\ell}\setminus(\mathcal{C}_{a}\sqcup\mathcal{C}_{b}\sqcup\mathcal{C}_{\ell})$
is a homeomorphism with image $Y_{a,b,0}\setminus(\mathcal{C}_{a}\sqcup\mathcal{C}_{b}\sqcup\mathcal{H}^{1})$
where we recall that $\mathcal{C}_{a}$ and $\mathcal{C}_{b}$ can
be empty if $a,b\geqslant2$ and they are the horocycles of length
$1$ if $a,b=0$.
\item The restricted map to $Y_{a,b,\ell}\setminus(\mathcal{C}_{a}\sqcup\mathcal{C}_{b}\sqcup\mathcal{C}_{\ell})$
maps the boundary of length $t_{\ell}$ to the horocycle of length
$1$ and the boundaries of lengths $t_{a}$ and $t_{b}$ (or just
$a$ and $b$ in case they have length $\geqslant2$) to their corresponding
identical boundaries. Moreover, it preserves the boundary constant
speed parametrisations. The restriction is bi-Lipschitz with distortion
factor bounded by $1+\frac{5}{2}\ell^{2}$.
\item The restriction of $\phi$ to $\mathcal{C}_{a}$ and $\mathcal{C}_{b}$
is an isometry onto their corresponding images.
\end{enumerate}
\end{lem}

The boundary preservation property in points 1 and 3 is called boundary
coherence of the map $\phi$ by the authors therein.

Notice that for $\gamma$ of length $\ell<L<\frac{1}{2}$, we have
that $\beta_{\gamma}<1$ is shorter than $t_{\ell}$. For our homeomorphism
$q$ we will thus wish to cut off a smaller collar than $\mathcal{C}_{\ell}$
around the geodesic $\gamma$ when it has length $<L$. Moreover,
in the image we will only want to cut off the horocycle $\mathcal{H}(-\log\beta_{\gamma},\infty)$
rather than $\mathcal{H}(0,\infty)$. 

The restriction of $\phi$ to $Y_{a,b,\ell}\setminus\mathcal{C}_{\ell}$
can be used to build this. Indeed, we will glue it together with a
nicely controlled map that sends the annulus $\mathcal{C}_{\ell}\setminus\mathcal{C}^{\beta_{\gamma}}_{\gamma}$
to the annulus $\mathcal{\mathcal{H}}(0,-\log\beta_{\gamma})$ . This
will then provide a bi-Lipschitz map between the $Y_{a,b,\ell}\setminus\mathcal{C}^{\beta_{\gamma}}_{\gamma}$
and $Y_{a,b,0}\setminus\mathcal{H}(-\log\beta_{\gamma},\infty)$. 

It is possible that in $X$ there may be pants with multiple boundaries
that are shorter than $L$ and so they will also need to be collapsed.
In this case, they will be short enough so that the $\mathcal{C}_{a}$
or $\mathcal{C}_{b}$ are non-empty and the boundary coherence property
of the restriction in 2 along with the isometric action in 4 will
allow us to compose the maps to collapse multiple boundaries successively.
This idea was also carried out in \cite[Corollary 5.2]{Bu.Ma.Mu.Si2014}.

The idea then will be to pick a pants decomposition of $X$ that includes
all of the geodesics of length at most $L$, collapse each of the
boundaries in the pants appropriately and glue the maps together.
Since the maps will act on different pants, the resulting bi-Lipschitz
distortions will be the maximum of the individual maps. We begin with
proving the existence of a bi-Lipschitz homeomorphism that maps the
annuli as above.
\begin{lem}
\label{lem:annulus-maps}Let $\gamma$ be a closed geodesic with length
$\ell<0.1$. With the notation above, there is an orientation-preserving
bi-Lipschitz homeomorphism 

\[
\Upsilon:\mathcal{C}_{\ell}\setminus\mathcal{C}^{\beta_{\gamma}}_{\gamma}\to\mathcal{H}(0,-\log\beta_{\gamma})
\]
with bi-Lipschitz distortion factor at most $1+\exp\left(-\mathrm{arcosh}\left(\frac{2}{\ell}\right)^{\frac{1}{4}}\right)$.
It maps the boundary of length $\beta_{\gamma}$ to the horocycle
of length $\beta_{\gamma}$ and the boundary of length $t_{\ell}$
to the horocycle of length $1$. $\Upsilon$ also preserves the constant
speed parameterisations of the boundaries and horocycles.
\end{lem}

\begin{proof}
For brevity, let $b_{\ell}=\frac{1}{2}\mathrm{arcosh}\left(\frac{2}{\ell}\right)^{\frac{1}{4}}$
be the width of the collar $\mathcal{C}^{\beta_{\gamma}}_{\gamma}$
and recall $w_{\ell}=\log\left(\frac{2}{\ell}\right)$. On $\mathcal{C}_{\ell}\setminus\mathcal{C}^{\beta_{\gamma}}_{\gamma}$
we have the coordinates

\[
\left\{ (r,\theta):b_{\ell}\leqslant r\leqslant w_{\ell},0\leqslant\theta\leqslant2\pi\right\} 
\]
with metric
\[
\mathrm{d}s^{2}=\mathrm{d}r^{2}+\left(\frac{\ell}{2\pi}\right)^{2}\cosh^{2}(r)\mathrm{d}\theta^{2}.
\]
On $\mathcal{H}(0,-\log\beta_{\gamma})$ we have the coordinates 
\[
\left\{ (r,\theta):0\leqslant r\leqslant-\log\beta_{\gamma},0\leqslant\theta\leqslant2\pi\right\} 
\]
with metric 
\[
\mathrm{d}s^{2}=\mathrm{d}r^{2}+\left(\frac{t}{2\pi}\right)^{2}e^{-2r}\mathrm{d}\theta^{2}.
\]
We describe the map explicitly (note it is essentially the map considered
in \cite{Co.Co.1989}):

\[
\Upsilon:(r,\theta)\mapsto(c_{\ell}(w_{\ell}-r),-\theta),
\]
where $c_{\ell}=\frac{-\log\beta_{\gamma}}{w_{\ell}-b_{\gamma}}$.
The minus sign on the $\theta$ is to make the map orientation preserving. 

Since the horocycles of length $0$ and $\beta_{\gamma}$ are represented
by $r=0$ and $r=-\log\beta_{\gamma}$ respectively, the boundaries
are mapped to the horocycles correctly. Moreover, the boundaries are
mapped to preserve the constant speed parametrisations. Also $\Upsilon$
is clearly a homeomorphism, and so it remains to show it is bi-Lipschitz
with the stated distortion. We compute this by finding a $K\geq1$
such that 
\[
\frac{1}{K}|v|\leqslant|d\Upsilon(v)|\leqslant K|v|,
\]
for any tangent vector $v$. 

To this end, notice that $\left\{ \partial_{r},\frac{2\pi}{\ell\cosh(r)}\partial_{\theta}\right\} $
and $\left\{ \partial_{r},2\pi e^{r}\partial_{\theta}\right\} $ are
orthonormal bases of tangent vectors for the two respective domains.
We can compute
\begin{align*}
d\Upsilon(\partial_{r}) & =-c_{\ell}\partial_{r},\\
d\Upsilon\left(\frac{2\pi}{\ell\cosh(r)}\partial_{\theta}\right) & =-\frac{e^{-c_{\ell}(w_{\ell}-r)}}{\ell\cosh(r)}\left(2\pi e^{c_{\ell}(w_{\ell}-r)}\partial_{\theta}\right).
\end{align*}
This means that if $v=x\partial_{r}+y\frac{2\pi}{\ell\cosh(r)}\partial_{\theta}$
is a tangent vector in the orthonormal basis so that $|v|^{2}=x^{2}+y^{2}$,
then
\[
|d\Upsilon(v)|^{2}=c^{2}_{\ell}x^{2}+\left(\frac{e^{-c_{\ell}(w_{\ell}-r)}}{\ell\cosh(r)}\right)^{2}y^{2}.
\]
Set $\sigma(\ell,r)=\frac{e^{-c_{\ell}(w_{\ell}-r)}}{\ell\cosh(r)}$.
Then,
\[
\min\left\{ c_{\ell},\sigma(\ell,r)\right\} |v|\leq|d\Upsilon(v)|\leq\max\left\{ c_{\ell},\sigma(\ell,r)\right\} |v|.
\]
The bi-Lipschitz constant $K$ is hence bounded above by 
\begin{equation}
K\leqslant\max\left\{ c_{\ell},\sigma(\ell,r),\frac{1}{c_{\ell}},\frac{1}{\sigma(\ell,r)}\right\} .\label{eq:bi-lips-bound}
\end{equation}
First, notice that 
\begin{align}
c_{\ell} & =\frac{-\log(\ell\cosh(b_{\ell}))}{w_{\ell}-b_{\ell}}\nonumber \\
 & =\frac{w_{\ell}-\log\left(e^{b_{\ell}}+e^{-b_{\ell}}\right)}{w_{\ell}-b_{\ell}}\nonumber \\
 & =1-\frac{\log(1+e^{-2b_{\ell}})}{w_{\ell}-b_{\ell}}.\label{eq:c_ell}
\end{align}
It is clear then that $c_{\ell}\leqslant1$. As for a lower bound,
we use the fact that $\log(1+x)\leqslant x$ so that

\[
c_{\ell}\geqslant1-\frac{e^{-2b_{\ell}}}{w_{\ell}-b_{\ell}}.
\]
Since $\mathrm{arcosh}(x)\leqslant\log(2x)$, we can readily check
that 
\[
w_{\ell}-b_{\ell}\geqslant\log\left(\frac{2}{\ell}\right)-\frac{1}{2}\log\left(\frac{4}{\ell}\right)^{\frac{1}{4}}\geqslant2
\]
whenever $\ell<0.1$. But, since $b_{\ell}>0$, we have $1+e^{-2b_{\ell}}\leqslant2$,
and so 
\[
c_{\ell}\geqslant\frac{1}{1+e^{-2b_{\ell}}}.
\]
Next, using the expression (\ref{eq:c_ell}) for $c_{\ell}$ and $w_{\ell}=\log\left(\frac{2}{\ell}\right)$
we have
\begin{align*}
\sigma(\ell,r) & =\frac{1}{\ell\cosh(r)}\exp\left(-(w_{\ell}-r)+\log\left(1+e^{-2b_{\ell}}\right)\frac{w_{\ell}-r}{w_{\ell}-b_{\ell}}\right)\\
 & =\frac{e^{r}}{2\cosh(r)}\left(1+e^{-2b_{\ell}}\right)^{\frac{w_{\ell}-r}{w_{\ell}-b_{\ell}}}\\
 & =\frac{1}{1+e^{-2r}}\left(1+e^{-2b_{\ell}}\right)^{\frac{w_{\ell}-r}{w_{\ell}-b_{\ell}}}.
\end{align*}
Since $r\geqslant b_{\ell}$, we have $\frac{w_{\ell}-r}{w_{\ell}-b_{\ell}}\leqslant1$,
and so immediately get the upper bound 
\[
\sigma(\ell,r)\leqslant1+e^{-2b_{\ell}}.
\]
Using instead that $\left(1+e^{-2b_{\ell}}\right)^{\frac{w_{\ell}-r}{w_{\ell}-b_{\ell}}}\geqslant1$
and $r\geqslant b_{\ell}$, we obtain the lower bound 
\[
\sigma(\ell,r)\geqslant\frac{1}{1+e^{-2b_{\ell}}}.
\]
Combining all of these bounds into (\ref{eq:bi-lips-bound}) then
gives the result.
\end{proof}

We now build the desired bi-Lipschitz homeomorphism $q$.
\begin{prop}
\label{prop:bi-lips-map}Suppose that $L<0.1$. There exists a bi-Lipschitz
homeomorphism 
\begin{align*}
q & :\tilde{X}^{(L)}=X\setminus\bigcup_{\gamma}\mathcal{C}^{\beta_{\gamma}}_{\gamma}\to\tilde{X}^{(L)}_{\infty}=X^{(L)}_{\infty}\setminus\bigcup_{\gamma}\mathcal{H}_{\mathfrak{c}^{(1)}_{\gamma}}(-\log\beta_{\gamma},\infty)\cup\mathcal{H}_{\mathfrak{c}^{(2)}_{\gamma}}(-\log\beta_{\gamma},\infty),
\end{align*}
where the union runs over all closed geodesics of length less than
$L$ in the first surface and in the second surface, it runs over
the cusps $\mathfrak{c}^{(1)}_{\gamma}$ and $\mathfrak{c}^{(2)}_{\gamma}$
arising from pinching down the geodesics $\gamma$ to nodes. Moreover,
the distortion factor satisfies $K[q]\leqslant\max\left\{ \left(1+\frac{5}{2}L^{2}\right)^{3},1+\exp\left(-\mathrm{arcosh}\left(\frac{2}{L}\right)^{\frac{1}{4}}\right)\right\} $.
\end{prop}

\begin{proof}
Consider the collection of all closed geodesics of length at most
$L$ and complete them to a pants decomposition of $X$. On any such
pants, it could be that any collection of its boundaries have length
shorter than $L$. 

Suppose first that we have a pair of pants where there is only one
boundary $\gamma$ of length $\ell\leqslant L$ that gets collapsed,
and let the other two boundaries have lengths $a$ and $b$. It is
isometric to the pants $Y_{a,b,\ell}$ and we want the map $q$ when
restricted to $Y_{a,b,\ell}\setminus\mathcal{C}^{\beta_{\gamma}}_{\gamma}$
to map into the corresponding chopped pants $Y_{a,b,0}\setminus\mathcal{H}_{\mathfrak{c}_{\gamma}}(-\log\beta_{\gamma},\infty)$
contained in $X^{(L)}_{\infty}$, where we suppose that $\mathfrak{c}_{\gamma}$
is the corresponding cusp.

To this end, we take the bi-Lipschitz homeomorphism $\phi|_{Y_{a,b,\ell}\setminus\mathcal{C}_{\ell}}$
from Lemma \ref{lem:buser-lemma} which has image $Y_{a,b,0}\setminus\mathcal{H}_{\mathfrak{c}_{\gamma}}(0,\infty)$
and distortion bounded by $1+\frac{5}{2}L^{2}$ (because it acts isometrically
on $\mathcal{C}_{a}$ and $\mathcal{C}_{b}$ in the chance they are
non-empty). It also has the boundary coherence property, mapping boundaries
to corresponding boundaries/horocycles in a way that preserves their
constant speed parametrisation.

We now take the map $\Upsilon$ from Lemma \ref{lem:annulus-maps}.
Because it maps the boundaries of length $\beta_{\gamma}$ and $t_{\ell}$
to the respective horocycles of lengths $\beta_{\gamma}$ and $1$,
and preserves their unit speed parametrisations, it matches perfectly
with $\phi|_{Y_{a,b,\ell}\setminus\mathcal{C}_{\ell}}$ along the
common boundary. The distortion factor of $\Upsilon$ is bounded by
\[
1+e^{-\frac{1}{2}\mathrm{arcosh}\left(\frac{2}{\ell}\right)^{\frac{1}{4}}}\leqslant1+e^{-\frac{1}{2}\mathrm{arcosh}\left(\frac{2}{L}\right)^{\frac{1}{4}}}.
\]
We now glue $\phi|_{Y_{a,b,\ell}\setminus\mathcal{C}_{\ell}}$ and
$\Upsilon$ together. In doing so, we obtain a map from $Y_{a,b,\ell}\setminus\mathcal{C}^{\beta_{\gamma}}_{\gamma}$
to $Y_{a,b,0}\setminus\mathcal{H}_{\mathfrak{c}_{\gamma}}(-\log\beta_{\gamma},\infty)$
such that the boundaries of lengths $a$ and $b$ are mapped to their
corresponding boundaries along with their constant speed parametrisations.
Moreover, since the map is obtained from gluing, the bi-Lipschitz
constant is bounded by $\max\left\{ 1+\frac{5}{2}L^{2},1+e^{-\frac{1}{2}\mathrm{arcosh}\left(\frac{2}{L}\right)^{\frac{1}{4}}}\right\} $.

The case where we want to collapse two or three boundaries is similar
and involves composing maps from Lemma \ref{lem:buser-lemma}. We
describe the procedure for two boundaries since it is the same for
three. Let $\gamma_{1}$ and $\gamma_{2}$ be the geodesics with lengths
$\ell_{1},\ell_{2}\leqslant L$ and let $\mathfrak{c}_{1}$ and $\mathfrak{c}_{2}$
denote the corresponding cusps. By Lemma \ref{lem:buser-lemma} we
obtain two maps $\phi_{1}:Y_{a,\ell_{1},\ell_{2}}\to Y_{a,0,\ell_{2}}\setminus\mathcal{H}^{\frac{2}{\pi}\ell_{1}}_{\mathfrak{c}_{1}}$
and $\phi_{2}:Y_{a,0,\ell_{2}}\to Y_{a,0,0}\setminus\mathcal{H}^{\frac{2}{\pi}\ell_{2}}_{\mathfrak{c}_{2}}$
such that 
\[
\phi_{1}|_{Y_{a,\ell_{1},\ell_{2}}\setminus\left(\mathcal{C}_{\ell_{1}}\sqcup\mathcal{C}_{\ell_{2}}\right)}:Y_{a,\ell_{1},\ell_{2}}\setminus\left(\mathcal{C}_{\ell_{1}}\sqcup\mathcal{C}_{\ell_{2}}\right)\to Y_{a,0,\ell_{2}}\setminus\left(\mathcal{H}^{1}_{\mathfrak{c}_{1}}\sqcup\mathcal{C}_{\ell_{2}}\right)
\]
is a bi-Lipschitz homeomorphism with distortion bounded by $1+\frac{5}{2}\ell^{2}_{1}\leqslant1+\frac{5}{2}L^{2}$
and 
\[
\phi_{2}|_{Y_{a,0,\ell_{2}}\setminus\left(\mathcal{H}^{1}_{\mathfrak{c}_{1}}\sqcup\mathcal{C}_{\ell_{2}}\right)}:Y_{a,0,\ell_{2}}\setminus\left(\mathcal{H}^{1}_{\mathfrak{c}_{1}}\sqcup\mathcal{C}_{\ell_{2}}\right)\to Y_{a,0,0}\setminus\left(\mathcal{H}^{1}_{\mathfrak{c}_{1}}\sqcup\mathcal{H}^{1}_{\mathfrak{c}_{2}}\right)
\]
is a bi-Lipschitz homeomorphism with distortion bounded by $1+\frac{5}{2}\ell^{2}_{2}\leqslant1+\frac{5}{2}L^{2}$.
Moreover, at each stage, the corresponding boundaries and horocycles
are mapped to one another such that their constant speed parametrisations
are preserved. 

We can now compose these restricted maps to obtain a bi-Lipschitz
homeomorphism $\phi:Y_{a,\ell_{1},\ell_{2}}\setminus\left(\mathcal{C}_{\ell_{1}}\sqcup\mathcal{C}_{\ell_{2}}\right)\to Y_{a,0,0}\setminus\left(\mathcal{H}^{1}_{\mathfrak{c}_{1}}\sqcup\mathcal{H}^{1}_{\mathfrak{c}_{2}}\right)$
with distortion bounded by $\left(1+\frac{5}{2}L^{2}\right)^{2}$.
In the case of three cusps, we would obtain a third map in the composition
and the distortion factor would have exponent 3.

For the remaining annuli, we use Lemma \ref{lem:annulus-maps} to
obtain bi-Lipschitz homeomorphisms with distortion bounded by $1+e^{-\frac{1}{2}\mathrm{arcosh}\left(\frac{2}{L}\right)^{\frac{1}{4}}}$.
We then glue the two annuli maps with $\phi$ to obtain the desired
homeomorphism that sends $Y_{a,\ell_{1},\ell_{2}}\setminus\left(\mathcal{C}^{\beta_{\gamma_{1}}}_{\gamma_{1}}\sqcup\mathcal{C}^{\beta_{\gamma_{2}}}_{\gamma_{2}}\right)$
to $Y_{a,0,0}\setminus\left(\mathcal{H}_{\mathfrak{c}_{1}}(-\log\beta_{\gamma_{1}},\infty)\sqcup\mathcal{H}_{\mathfrak{c}_{2}}(-\log\beta_{\gamma_{2}},\infty)\right)$
and has bi-Lipschitz distortion bounded by $\max\left\{ \left(1+\frac{5}{2}L^{2}\right)^{2},1+e^{-\frac{1}{2}\mathrm{arcosh}\left(\frac{2}{L}\right)^{\frac{1}{4}}}\right\} $.
For three cusps, the exponent of the first term becomes $3$.

The map $q$ is now obtained by gluing together each of the individual
pants maps; on a pants where no boundaries are collapsed we do the
identity map. The gluing is possible because each of our pants maps
ensured that the boundaries that are not collapsed are mapped to themselves
with the correct constant speed parametrisation. Since we are gluing
bi-Lipschitz homeomorphisms, the result is a bi-Lipschitz homeomorphism
with distortion factor bounded by the maximum among all of the glued
maps which is bounded uniformly by $\max\left\{ \left(1+\frac{5}{2}L^{2}\right)^{3},1+e^{-\frac{1}{2}\mathrm{arcosh}\left(\frac{2}{L}\right)^{\frac{1}{4}}}\right\} $.
\end{proof}

We now prove a consequence of the existence of the homeomorphism $q$
that allows us to compare the lengths of closed geodesics between
$X$ and $X_{\infty}$.
\begin{lem}
\label{lem:curves-on-limit-surface}Let $X$ be a hyperbolic surface
with signature $(g,n)$ and let $L<0.1$. Let $X^{(L)}_{\infty}$
be the (possibly disconnected) surface obtained above by shrinking
the simple closed geodesics of length $<L$ down to nodes, and suppose
that it has genus $g_{\infty}$ and $n_{\infty}$ cusps. Then, $\mathrm{sys}(X_{\infty})>\frac{1}{2}L$.
\end{lem}

\begin{proof}
If $\mathrm{sys}(X)>L$, then $X=X^{(L)}_{\infty}$ and there is nothing
to prove, so suppose this is not the case. Let $\gamma$ be a closed
geodesic on $X^{(L)}_{\infty}$ so that $\gamma$ is not contractible
or freely homotopic to a cusp. Moreover, assume that it has length
at most $\frac{1}{2}$; then $\gamma$ must lie entirely in $\tilde{X}^{(L)}_{\infty}$.
Indeed, the distance between the horocycles of length $\beta_{\alpha}$
and $1$ around the cusps in $X^{(L)}_{\infty}$ that arise from collapsing
geodesics $\alpha$ in $X$ is $-\log\beta_{\alpha}>1$ since $L<0.1$.
So, if $\gamma$ entered the horocycle neighbourhood bounded by the
horocycle of length $\beta_{\alpha}$, then it must lie fully within
the length 1 horocycle neighbourhood of one of the cusps which are
embedded cuspidal domains. Thus, $\gamma$ must be contractible or
homotopic to the cusp which is not the case.

Consider $q^{-1}(\gamma)$, which is now a closed curve in $\tilde{X}^{(L)}$.
Since $q^{-1}$ is a homeomorphism, it preserves the free homotopy
classes of curves and, in particular, it can only send contractible
curves to contractible curves and curves freely homotopic to cusps
or boundaries to curves freely homotopic to cusps or boundaries. Since
$\gamma$ falls into none of these cases (it cannot be homotopic to
a boundary in $\tilde{X}^{(L)}_{\infty}$ because these are horocycles,
and hence it would be homotopic to a cusp in $X^{(L)}_{\infty}$),
$q^{-1}(\gamma)$ is homotopic to a closed geodesic in $\tilde{X}^{(L)}$.
The preservation of free homotopy classes under $q^{-1}$ also means
that this geodesic is unique, that is, non-homotopic closed geodesics
on $\tilde{X}^{(L)}_{\infty}$ map to non-homotopic curves on $\tilde{X}^{(L)}$.

Since all of the closed geodesics on $\tilde{X}^{(L)}$ have length
$>L$, and $q^{-1}$ is bi-Lipschitz with distortion factor $K[q]\leqslant2$
it follows that $\ell(\gamma)\geqslant\frac{1}{K[q]}\ell(q^{-1}(\gamma))>\frac{1}{2}L$
as desired. 
\end{proof}

\subsection{Comparing eigenvalues \label{subsec:Comparing-eigenvalues}}

In the previous section, we showed how we can compare the geometry
and geodesics between $X$ and $X^{(L)}_{\infty}$. In this section,
we will show we can compare their small eigenvalues. We will make
use of the min-max theorem which can be found for example in \cite{Sm12}.
\begin{lem}[Min-max]
\label{lem:Minmax}Let $A$ be a non-negative self-adjoint operator
on a Hilbert space $H$ with domain $\mathcal{D}\left(A\right)$.
Let $\lambda_{1}\leqslant\dots\leqslant\lambda_{k}$ denote the eigenvalues
of $A$ below the essential spectrum $\sigma_{ess}\left(A\right)$.
Then for $1\leqslant j\leqslant k,$
\[
\lambda_{j}=\min_{\psi_{1},\dots,\psi_{j}}\max\left\{ \frac{\langle\psi,A\psi\rangle}{\|\psi\|^{2}}:0\neq\psi\in\textup{span}\left(\psi_{1},\dots,\psi_{j}\right)\right\} ,
\]
where the minimum is taken over linearly independent $\psi_{1},\dots,\psi_{j}\in\mathcal{D}\left(A\right)$.
If $A$ only has $l\geqslant0$ eigenvalues below the essential spectrum
then for any integer $s\geqslant1$,
\[
\inf\sigma_{ess}\left(A\right)=\inf_{\psi_{1},\dots,\psi_{l+s}}\sup\left\{ \frac{\langle\psi,A\psi\rangle}{\|\psi\|^{2}}:0\neq\psi\in\textup{span}\left(\psi_{1},\dots,\psi_{l+s}\right)\right\} ,
\]
where again the infimum is taken over linearly independent $\psi_{1},\dots,\psi_{\ell+s}\in\mathcal{D}\left(A\right)$.
\end{lem}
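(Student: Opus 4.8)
The plan is to derive both parts from the spectral theorem for $A$ together with one elementary dimension count. Let $E$ denote the projection-valued spectral measure of $A$, so that $A=\int_{[0,\infty)}t\,\mathrm{d}E(t)$ and $\langle\psi,A\psi\rangle=\int_{[0,\infty)}t\,\mathrm{d}\|E(t)\psi\|^{2}$ for every $\psi\in\mathcal{D}(A)$. The observation I will use twice is: if $\mu\ge 0$ and $W\subseteq\mathcal{D}(A)$ is a linear subspace with $\dim W>\operatorname{rank}E([0,\mu))$, then $E([0,\mu))$ maps $W$ into a space of strictly smaller dimension, so by rank--nullity there is a nonzero $\psi\in W$ with $E([0,\mu))\psi=0$; for such $\psi$ the measure $\mathrm{d}\|E(t)\psi\|^{2}$ is supported on $[\mu,\infty)$, hence $\langle\psi,A\psi\rangle\ge\mu\|\psi\|^{2}$. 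Dually, any $\psi$ in the range of $E([0,\mu])$ satisfies $\langle\psi,A\psi\rangle\le\mu\|\psi\|^{2}$. Note that spectral subspaces $\operatorname{Ran}E([a,b))$ with $b<\infty$ lie in $\mathcal{D}(A)$, so all Rayleigh quotients below are genuinely given by these spectral integrals.

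For the first assertion, fix $1\le j\le k$. For the ``$\le$'' direction, take $\psi_{1},\dots,\psi_{j}$ to be orthonormal eigenfunctions for $\lambda_{1},\dots,\lambda_{j}$; for $\psi=\sum_{i}c_{i}\psi_{i}$ one has $\langle\psi,A\psi\rangle=\sum_{i}|c_{i}|^{2}\lambda_{i}\le\lambda_{j}\|\psi\|^{2}$ with equality at $\psi=\psi_{j}$, so the maximum over this span is exactly $\lambda_{j}$. For the ``$\ge$'' direction, let $\psi_{1},\dots,\psi_{j}\in\mathcal{D}(A)$ be linearly independent with span $W$ of dimension $j$. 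Since the $\lambda_{i}$ lie below the essential spectrum, all spectrum of $A$ in $[0,\lambda_{j})$ consists of eigenvalues, and $\operatorname{rank}E([0,\lambda_{j}))$ equals the number of eigenvalues (with multiplicity) strictly below $\lambda_{j}$, which is at most $j-1<\dim W$. The observation gives $\psi\in W$ with $\langle\psi,A\psi\rangle\ge\lambda_{j}\|\psi\|^{2}$, so the maximum over $W$ is $\ge\lambda_{j}$; since $W$ is arbitrary, the minimum equals $\lambda_{j}$.

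For the second assertion, set $\Lambda=\inf\sigma_{ess}(A)$; by hypothesis $\operatorname{rank}E([0,\Lambda))=l$, the total multiplicity of the eigenvalues below $\Lambda$. Given linearly independent $\psi_{1},\dots,\psi_{l+s}\in\mathcal{D}(A)$, their span has dimension $l+s>l$, so the observation produces $\psi$ in the span with $\langle\psi,A\psi\rangle\ge\Lambda\|\psi\|^{2}$; hence the supremum over the span is $\ge\Lambda$ and the infimum in the statement is $\ge\Lambda$. For the reverse inequality, fix $\varepsilon>0$ small enough that $(\Lambda-\varepsilon,\Lambda)$ meets no spectrum, which is possible since the finitely many eigenvalues below $\Lambda$ are isolated; then $E([\Lambda,\Lambda+\varepsilon))=E((\Lambda-\varepsilon,\Lambda+\varepsilon))$ has infinite rank because $\Lambda\in\sigma_{ess}(A)$. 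Pick orthonormal eigenfunctions $\psi_{1},\dots,\psi_{l}$ for the eigenvalues below $\Lambda$ and orthonormal $\psi_{l+1},\dots,\psi_{l+s}\in\operatorname{Ran}E([\Lambda,\Lambda+\varepsilon))$; the resulting $l+s$ vectors are orthonormal and in $\mathcal{D}(A)$. Decomposing $\psi$ in their span as $\phi_{1}+\phi_{2}$ with $\phi_{1}\in\operatorname{Ran}E([0,\Lambda))$ and $\phi_{2}\in\operatorname{Ran}E([\Lambda,\Lambda+\varepsilon))$, the $A$-orthogonality of these spectral subspaces gives $\langle\psi,A\psi\rangle=\langle\phi_{1},A\phi_{1}\rangle+\langle\phi_{2},A\phi_{2}\rangle\le\Lambda\|\phi_{1}\|^{2}+(\Lambda+\varepsilon)\|\phi_{2}\|^{2}\le(\Lambda+\varepsilon)\|\psi\|^{2}$, so the supremum over the span is $\le\Lambda+\varepsilon$. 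Letting $\varepsilon\to0$ completes the proof. I expect the only delicate points to be the bookkeeping around spectral projections, and these are precisely where the ``below the essential spectrum'' hypothesis enters: that $\operatorname{rank}E([0,\mu))$ counts exactly the eigenvalues below $\mu$ (discreteness of the spectrum below $\sigma_{ess}(A)$), that bounded spectral subspaces sit inside $\mathcal{D}(A)$ so the Rayleigh quotients are literally the spectral integrals, and that $\Lambda\in\sigma_{ess}(A)$ forces $\operatorname{Ran}E([\Lambda,\Lambda+\varepsilon))$ to be infinite-dimensional; a reader who prefers can instead quote these facts directly from a reference such as \cite{Sm12}.
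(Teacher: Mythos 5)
The paper does not prove this lemma; it is quoted without proof from \cite{Sm12}, so there is no argument in the paper to compare yours against. Your spectral-theorem proof is correct. The single observation you isolate — a subspace $W\subseteq\mathcal{D}(A)$ with $\dim W>\operatorname{rank}E\bigl([0,\mu)\bigr)$ must contain a nonzero vector annihilated by $E\bigl([0,\mu)\bigr)$, hence with Rayleigh quotient $\geq\mu$ — cleanly supplies the lower bound in both parts, and the matching upper bounds come from the natural trial subspaces: for the first assertion, the span of eigenvectors $\psi_{1},\dots,\psi_{j}$; for the second, the eigenvectors below $\Lambda=\inf\sigma_{ess}(A)$ together with an $s$-dimensional slice of $\operatorname{Ran}E\bigl([\Lambda,\Lambda+\varepsilon)\bigr)$, where the infinite rank of this projection is precisely what $\Lambda\in\sigma_{ess}(A)$ buys you. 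The bookkeeping items you flag at the end — discreteness of the spectrum below $\sigma_{ess}(A)$ so that $\operatorname{rank}E\bigl([0,\mu)\bigr)$ counts eigenvalues, bounded spectral subspaces lying in $\mathcal{D}(A)$ so Rayleigh quotients are literal spectral integrals, the gap $(\Lambda-\varepsilon,\Lambda)$ being spectrum-free and $E\bigl((\Lambda-\varepsilon,\Lambda+\varepsilon)\bigr)$ having infinite rank — are exactly what is needed and are standard facts about self-adjoint operators. In short, this is the classical Courant--Fischer/Glazman argument, correctly executed; it is a valid replacement for the citation.
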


The following proposition and its proof were suggested to us by an
anonymous referee, we are very grateful for this suggestion. Similar
ideas appear in work of Li \cite{Li20} and Ballman-Polymerakis \cite{Ba.Po.24}.
\begin{prop}
\label{prop:passing-evalues}Let $0<d<\frac{1}{4}$ be given and suppose
that $\eta>0$ is chosen to satisfy
\[
\eta^{4}\left(\frac{1}{4}-\frac{d}{2}\right)\leqslant\frac{1}{4}-\frac{d}{4}.
\]
Then, for any $0<L<0.1$ that satisfies
\begin{equation}
\max\left\{ \left(1+\frac{5}{2}L^{2}\right)^{3},1+\exp\left(-\mathrm{arcosh}\left(\frac{2}{L}\right)^{\frac{1}{4}}\right)\right\} <\eta\label{eq:bilips-UB}
\end{equation}
and 
\begin{equation}
-\log\left(L\cosh\left(\frac{1}{2}\mathrm{arcosh}\left(\frac{2}{L}\right)^{\frac{1}{4}}\right)\right)\geqslant\sqrt{\frac{8}{d}},\label{eq:dist-LB}
\end{equation}
 if $X^{(L)}_{\infty}$ has at least $m$ eigenvalues smaller than
$\frac{1}{4}-d$, then $X$ has at least $m$ eigenvalues smaller
than $\frac{1}{4}-\frac{d}{4}$.
\end{prop}

\begin{rem}
Given any $0<d<\frac{1}{4}$, such an $L$ always exists. Indeed,
the condition on $\eta$ allows us to choose $\eta>1+\varepsilon$
for some $\varepsilon>0$ depending only upon $d$. Since both $\frac{5}{2}L^{2}$
and $\exp\left(-\mathrm{arcosh}\left(\frac{2}{L}\right)^{\frac{1}{4}}\right)$
tend to zero as $L\to0$, we can ensure that $L$ is small, so that
their maximum is less than $\varepsilon$. Now, $-\log\left(L\cosh\left(\frac{1}{2}\mathrm{arcosh}\left(\frac{2}{L}\right)^{\frac{1}{4}}\right)\right)\to\infty$
as $L\to0$, and so by shrinking $L$ further, we can ensure that
(\ref{eq:dist-LB}) is also satisfied.
\end{rem}

\begin{proof}
In $\tilde{X}^{(L)}_{\infty}$ we cut off the horocycle neighbourhoods
that are bounded by the horocycles of lengths at most
\[
L\cosh\left(\frac{1}{2}\mathrm{arcosh}\left(\frac{2}{L}\right)^{\frac{1}{4}}\right)<1.
\]
The distance between these horocycles and the length 1 horocycle of
the same cusp is then
\[
h_{\max}:=-\log\left(L\cosh\left(\frac{1}{2}\mathrm{arcosh}\left(\frac{2}{L}\right)^{\frac{1}{4}}\right)\right)\geqslant\sqrt{\frac{8}{d}}.
\]
Now, let $\left\{ \varphi_{1},\ldots,\varphi_{m}\right\} $ be an
orthonormal basis of eigenfunctions for the first $m$ eigenvalues
of $X^{(L)}_{\infty}$. Choose a function $\chi\in C^{\infty}(X^{(L)}_{\infty})$
that is zero on and in a smaller neighbourhood of 

\[
V_{L}=\bigcup_{\gamma}\mathcal{H}_{\mathfrak{c}^{(1)}_{\gamma}}(-\log\beta_{\gamma},\infty)\cup\mathcal{H}_{\mathfrak{c}^{(2)}_{\gamma}}(-\log\beta_{\gamma},\infty),
\]
where the union runs over all of the closed geodesics of length $\leqslant L$
in $X$, and is equal to $\frac{\pi}{2}$ on and in a neighbourhood
of
\[
X^{(L)}_{\infty}\setminus\bigcup_{\gamma}\mathcal{H}_{\mathfrak{c}^{(1)}_{\gamma}}(0,\infty)\cup\mathcal{H}_{\mathfrak{c}^{(2)}_{\gamma}}(0,\infty).
\]
Due to the lower bound on the distance between the two horocycles
over which $\chi$ smooths from $\frac{\pi}{2}$ to $0$, we can also
ensure than $\|\nabla\chi\|<\frac{2}{h_{\max}}$.

Now set $\chi_{1}=\sin\chi$ and $\chi_{2}=\cos\chi$ so that $\chi^{2}_{1}+\chi^{2}_{2}=1$.
If $f\in\mathrm{span}\left\{ \varphi_{1},\ldots,\varphi_{m}\right\} $
and $\|f\|_{2}=1$, then 
\[
\frac{1}{4}-d\geqslant\int_{X^{(L)}_{\infty}}\|\nabla f\|^{2}\mathrm{d}\mu_{X^{(L)}_{\infty}}=\sum_{i=1,2}\int_{X^{(L)}_{\infty}}\|\nabla(\chi_{i}f)\|^{2}\mathrm{d}\mu_{X^{(L)}_{\infty}}-\int_{X^{(L)}_{\infty}}\|\nabla\chi_{i}\|^{2}f^{2}\mathrm{d}\mu_{X^{(L)}_{\infty}}.
\]
The equality comes from the identity

\begin{align*}
\|\nabla(\chi_{i}f)\|^{2} & =\|\chi_{i}\nabla f+f\nabla\chi_{i}\|^{2}\\
 & =\chi^{2}_{i}\|\nabla f\|^{2}+f^{2}\|\nabla\chi_{i}\|^{2}+2f\langle\chi_{i}\nabla\chi_{i},\nabla f\rangle,
\end{align*}
and so upon summing and using $\chi^{2}_{1}+\chi^{2}_{2}=1$ we obtain
\[
\sum_{i=1,2}\|\nabla(\chi_{i}f)\|^{2}=\|\nabla f\|^{2}+\sum_{i=1,2}f^{2}\|\nabla\chi_{i}\|^{2}+2f\langle\chi_{1}\nabla\chi_{1}+\chi_{2}\nabla\chi_{2},\nabla f\rangle.
\]
Since $\nabla\chi^{2}_{i}=2\chi_{i}\nabla\chi_{i}$ we have

\[
0=\nabla(\chi^{2}_{1}+\chi^{2}_{2})=2(\chi_{1}\nabla\chi_{1}+\chi_{2}\nabla\chi_{2})
\]
and so the claimed equality holds. Now since 
\[
\|\nabla\chi_{1}\|^{2}+\|\nabla\chi_{2}\|^{2}=\cos^{2}(\chi)\|\nabla\chi\|^{2}+\sin^{2}(\chi)\|\nabla\chi\|^{2}=\|\nabla\chi\|^{2}<\frac{4}{h^{2}_{max}}\leqslant\frac{d}{2}
\]
 and $\|f\|_{2}=1$, we obtain 
\begin{equation}
\frac{1}{4}-\frac{d}{2}\geqslant\sum_{i=1,2}\int_{X^{(L)}_{\infty}}\|\nabla(\chi_{i}f)\|^{2}\mathrm{d}\mu_{X^{(L)}_{\infty}}.\label{eq:nabla-bd}
\end{equation}
Note that since $\cos(\frac{\pi}{2})=0$ and $\cos(0)=1$, the definition
of $\chi$ means that $\chi_{2}$ is supported entirely in the horocycle
neighbourhood domains bounded by horocycles of length $1$ and vanishes
near their boundary. Since these domains are isometric to cuspidal
domains, we have 
\begin{equation}
\int_{X^{(L)}_{\infty}}\|\nabla(\chi_{2}f)\|^{2}\mathrm{d}\mu_{X^{(L)}_{\infty}}\geqslant\frac{1}{4}\int_{X_{\infty}}(\chi_{2}f)^{2}\mathrm{d}\mu_{X^{(L)}_{\infty}}.\label{eq:cuspidal-lb}
\end{equation}
If $\chi_{1}f=0$, then since $\chi^{2}_{2}f^{2}=\chi^{2}_{1}f^{2}+\chi^{2}_{2}f^{2}=f^{2}$
and $\|f\|_{2}=1$, we would obtain a contradiction with (\ref{eq:nabla-bd})
and so $\chi_{1}f\neq0$. 

Since $\chi_{1}=\sin\chi$ is zero in a neighbourhood of $V_{L}$,
we see that $\chi_{1}$ acts by multiplication on $\mathrm{span}\left\{ \varphi_{1},\ldots,\varphi_{m}\right\} $
to send it to an $m$-dimensional subspace of domain of the Laplacian
on $\tilde{X}^{(L)}_{\infty}$. By pre-composing the functions in
this subspace with the bi-Lipschitz map $q$ from Proposition \ref{prop:bi-lips-map},
we obtain an $m$-dimensional subspace of the Laplacian on $\tilde{X}^{(L)}$,
and all of the functions smoothly vanish near the boundary by definition
of $\chi_{1}$ so we can extend them to zero on $X\setminus\tilde{X}^{(L)}$. 

In summary, we have obtained an $m$-dimensional subspace of the domain
of the Laplacian on $X$ given by $\mathrm{span}\left\{ \widetilde{(\chi_{1}\varphi_{1})\circ q},\ldots,\widetilde{(\chi_{1}\varphi_{m})\circ q}\right\} $,
where the tilde denotes the extension by zero. Note that (for example
by \cite[Proof of Theorem 14.9.2]{Bu2010}) for any sufficiently smooth
$f$ that vanishes on the boundaries we have

\[
\mathcal{R}(f\circ q,\tilde{X}^{(L)})\leqslant K[q]^{4}\mathcal{R}(f,\tilde{X}^{(L)}_{\infty}),
\]
where $\mathcal{R}(\cdot,\cdot)$ is the Rayleigh quotient.

We obtain for any $F\in\mathrm{span}\left\{ \widetilde{(\chi_{1}\varphi_{1})\circ q},\ldots,\widetilde{(\chi_{1}\varphi_{m})\circ q}\right\} $,
so that $F=\sum^{m}_{i=1}a_{i}\widetilde{(\chi_{1}\varphi_{i})\circ q}$,
that
\begin{align*}
\mathcal{R}(F,X) & =\mathcal{R}\left(\sum^{m}_{i=1}a_{i}(\chi_{1}\varphi_{i})\circ q,\tilde{X}^{(L)}\right)\\
 & \leqslant K[q]^{4}\mathcal{R}\left(\sum^{m}_{i=1}a_{i}\chi_{1}\varphi_{i},\tilde{X}^{(L)}_{\infty}\right)\\
 & \leqslant\eta^{4}\mathcal{R}\left(\sum^{m}_{i=1}a_{i}\chi_{1}\varphi_{i},\tilde{X}^{(L)}_{\infty}\right).
\end{align*}
Let $f=\sum^{m}_{i=1}a_{i}\varphi_{i}$. From (\ref{eq:nabla-bd})
and (\ref{eq:cuspidal-lb}), we have (using again that $\chi^{2}_{1}+\chi^{2}_{2}=1$)
that
\begin{align*}
\frac{\int_{\tilde{X}^{(L)}_{\infty}}\left\Vert \nabla(\chi_{1}f)\right\Vert ^{2}\mathrm{d}\mu_{X^{(L)}_{\infty}}}{\int_{X^{(L)}_{\infty}}f^{2}\mathrm{d}\mu_{X^{(L)}_{\infty}}} & =\frac{\int_{X^{(L)}_{\infty}}\left\Vert \nabla(\chi_{1}f)\right\Vert ^{2}\mathrm{d}\mu_{X^{(L)}_{\infty}}}{\int_{X^{(L)}_{\infty}}f^{2}\mathrm{d}\mu_{X^{(L)}_{\infty}}}\\
 & \leqslant\frac{1}{4}\left(1-\frac{\int_{X^{(L)}_{\infty}}(\chi_{2}f)^{2}\mathrm{d}\mu_{X^{(L)}_{\infty}}}{\int_{X^{(L)}_{\infty}}f^{2}\mathrm{d}\mu_{X^{(L)}_{\infty}}}\right)-\frac{d}{2}\\
 & =\frac{1}{4}\left(\frac{\int_{X^{(L)}_{\infty}}\left(\chi_{1}f\right)^{2}\mathrm{d}\mu_{X^{(L)}_{\infty}}}{\int_{X^{(L)}_{\infty}}f^{2}\mathrm{d}\mu_{X^{(L)}_{\infty}}}\right)-\frac{d}{2}.
\end{align*}
It follows that 
\[
\mathcal{R}\left(\chi_{1}f,\tilde{X}^{(L)}_{\infty}\right)\leqslant\frac{1}{4}-\frac{d}{2}\frac{\int_{X^{(L)}_{\infty}}f^{2}\mathrm{d}\mu_{X^{(L)}_{\infty}}}{\int_{X^{(L)}_{\infty}}\left(\chi_{1}f\right)^{2}\mathrm{d}\mu_{X^{(L)}_{\infty}}}\leqslant\frac{1}{4}-\frac{d}{2},
\]
since $\left(\chi_{1}f\right)^{2}=\left(f\sin\chi\right)^{2}\leqslant f^{2}$.
Using 
\[
\eta^{4}\left(\frac{1}{4}-\frac{d}{2}\right)\leqslant\frac{1}{4}-\frac{d}{4},
\]
we obtain $\mathcal{R}(F,X)\leqslant\frac{1}{4}-\frac{d}{4}$ and
so the result follows from Lemma \ref{lem:Minmax}. Indeed, if $X$
has less than $m$ eigenvalues below $\frac{1}{4}$, then by the second
part of Lemma \ref{lem:Minmax} we would have
\[
\frac{1}{4}=\inf\sigma_{\mathrm{ess}}(\Delta)\leq\sup\left\{ \mathcal{R}(F,X):F\in\mathrm{span}(\widetilde{(\chi_{1}\varphi_{i})\circ q},i=1,\ldots,m)\right\} ,
\]
which contradicts the above bound. The first part of Lemma \ref{lem:Minmax}
shows that the eigenvalues are smaller than $\frac{1}{4}-\frac{d}{4}$.
\end{proof}

\section{Proof of Theorem \ref{thm:main-thm}}

\label{sec:Proof-of-Theorem}

We will now prove Theorem \ref{thm:main-thm}. Let $X$ be a hyperbolic
surface with genus $g$ and $n$ cusps with $g<an$ for some $a>0$.
By Lemma \ref{thm:many-pants}, there then exist at least $c(a)n$
pairs of pants on $X$ with at least one cusp where

\[
c(a)\geqslant\frac{1}{64}\exp\left(-\frac{1}{2}-10\pi(2a+1)-2\exp\left(5\pi(2a+1)\right)\right),
\]
and the geodesic boundaries have length bounded by $10\pi(2a+1)$.

By Lemma \ref{lem:crit-exponent}, there exists a $\kappa(a)>0$ for
which any pair of pants with at least one cusp and boundaries with
lengths in $[0,20\pi(2a+1)]$ has critical exponent at least $\frac{1}{2}+\kappa(a)$.
Fix $d<\frac{\kappa(a)^{2}}{9}$ and fix $0<L<0.1$ to satisfy both
(\ref{eq:bilips-UB}) and (\ref{eq:dist-LB}). We will now find $b>0$
dependent on $a$ for which $X$ has at least $b(2g+n-2)$ small eigenvalues
smaller than $\frac{1}{4}-\frac{d}{4}$.

Our proof will follow from comparing the trace formula with the function
$f_{T}$ for different scales of $T$ independent of $g$ and $n$.
First we consider the case for $T=1$.
\begin{prop}
\label{prop:T=00003D1}Let $a>0$ be such that $g<an$. Let $X$ be
a (possibly disconnected) hyperbolic surface with (total) genus $g$
and $n$ cusps. Let $f_{1}$ and $h_{1}$ be defined as in equation
(\ref{eq:fn-choice}) and (\ref{eq:h_t}). Then, there exists $C(a)>0$
such that
\[
\sum_{\lambda_{j}\geqslant\frac{1}{4}}h_{1}(r_{j})+\frac{1}{4\pi}\int^{\infty}_{-\infty}h_{1}(r)\frac{-\varphi'}{\varphi}\left(\frac{1}{2}+ir\right)\mathrm{\mathrm{d}}r\leqslant C(a)\left(\sum_{\substack{\gamma\in\mathcal{P}(X)\\
\ell(\gamma)\leqslant\frac{L}{2}
}
}\log\left(\frac{1}{\ell(\gamma)}\right)+g+n\right).
\]
\end{prop}

\begin{proof}
We will assume that $X$ is connected because if this is not the case,
we can simply apply the following method to each connected component,
replacing $g$ and $n$ by the genus and number of cusps on each component,
and then summing the contributions.

We will analyze each term in the trace formula. First note that the
scattering matrix $\Phi(\frac{1}{2})$ is real and symmetric \cite[Theorem 6.6]{Iw2021}
and so its eigenvalues are $\pm1$, hence 
\[
\frac{h_{1}(0)}{4}\mathrm{Tr}\left(I-\Phi\left(\frac{1}{2}\right)\right)\leqslant\frac{h_{1}(0)}{2}n.
\]
Moreover, since $h_{1}$ is a Schwartz function and $|\psi(1+ir)|\leq\log(1+r^{2})+\mathrm{const}$
(see for example \cite[Formula 6.3.21]{Ab.St1968}), we easily obtain
\[
\left|\frac{\mathrm{Vol}(X)}{4\pi}\int^{\infty}_{-\infty}h_{1}(r)r\tanh(\pi r)\text{d}r-\frac{n}{2\pi}\int^{\infty}_{-\infty}h_{1}(r)\psi(1+ir)\mathrm{d}r\right|\leqslant C_{1}(g+n),
\]
for a constant $C_{1}>0$ dependent only upon $f_{1}$. Since $h_{1}\geqslant0$
on $\mathbb{R}\cup i\mathbb{R}$, 
\[
-\sum_{\lambda_{j}<\frac{1}{4}}h_{1}(r_{j})\leqslant0,
\]
and trivially, 
\[
nf_{1}(0)\log(2)\leqslant C_{2}n,
\]
for a constant $C_{2}>0$ depending only on $f_{1}$. Next, due to
the compact support of $f_{1}$,
\begin{align*}
\sum_{\gamma\in\mathcal{P}\left(X\right)}\sum^{\infty}_{k=1}\frac{\ell(\gamma)f_{1}(k\ell(\gamma))}{2\sinh\left(\frac{k\ell(\gamma)}{2}\right)} & \leqslant\|f_{1}\|_{\infty}\sum_{\substack{\gamma\in\mathcal{P}(X)\\
\ell(\gamma)\leqslant1
}
}\sum^{\left\lfloor \frac{1}{\ell(\gamma)}\right\rfloor }_{k=1}\frac{\ell(\gamma)}{2\sinh\left(\frac{k\ell(\gamma)}{2}\right)}\\
 & \leqslant\|f_{1}\|_{\infty}\sum_{\substack{\gamma\in\mathcal{P}(X)\\
\ell(\gamma)\leqslant1
}
}\sum^{\left\lfloor \frac{1}{\ell(\gamma)}\right\rfloor }_{k=1}\frac{1}{k}\\
 & \leqslant C_{3}\left(\sum_{\substack{\gamma\in\mathcal{P}(X)\\
\ell(\gamma)\leqslant\frac{L}{2}
}
}\log\left(\frac{1}{\ell(\gamma)}\right)+\#\{\gamma\in\mathcal{P}\left(X\right)\mid\frac{L}{2}<\ell(\gamma)\leqslant1\}\right)\\
 & \leqslant C_{3}\left(\sum_{\substack{\gamma\in\mathcal{P}(X)\\
\ell(\gamma)\leqslant\frac{L}{2}
}
}\log\left(\frac{1}{\ell(\gamma)}\right)+g+n\right),
\end{align*}
for a constant $C_{3}>0$ depending only on $f_{1}$ and $a$ and
changing between lines. Here, the second line follows from using $\sinh(x)\geqslant x$,
the third line follows from splitting up the lengths at $\frac{L}{2}$
and bounding the inner summation by a uniform constant depending only
on $L$ (and hence only on $a$) for the longer lengths, and comparing
the summation to an integral for the shorter ones. The final line
follows from the fact that the closed geodesics being shorter than
$1$ means they are simple and disjoint and hence form part of a pants
decomposition of the surface which has at most $3g+n-3$ curves. Put
together, Theorem \ref{thm:trace-formula} gives
\begin{align*}
\sum_{\lambda_{j}\geqslant\frac{1}{4}}h_{1}(r_{j})+\frac{1}{4\pi} & \int^{\infty}_{-\infty}h_{1}(r)\frac{-\varphi'}{\varphi}\left(\frac{1}{2}+ir\right)\text{d}r\\
 & =-\sum_{\lambda_{j}<\frac{1}{4}}h_{1}(r_{j})+\frac{\mathrm{Vol}(X)}{4\pi}\int^{\infty}_{-\infty}h_{1}(r)r\tanh(\pi r)\mathrm{d}r\\
 & +\sum_{\substack{\gamma\in\mathcal{P}\left(X\right)}
}\sum^{\infty}_{k=1}\frac{\ell(\gamma)f_{1}(k\ell(\gamma))}{2\sinh\left(\frac{k\ell(\gamma)}{2}\right)}+\frac{h_{1}(0)}{4}\mathrm{Tr}\left(I-\Phi\left(\frac{1}{2}\right)\right)\\
 & -nf_{1}(0)\log(2)-\frac{n}{2\pi}\int^{\infty}_{-\infty}h_{1}(r)\psi(1+ir)\mathrm{\mathrm{d}}r.\\
 & \leqslant C\left(\sum_{\substack{\gamma\in\mathcal{P}(X)\\
\ell(\gamma)\leqslant\frac{L}{2}
}
}\log\left(\frac{1}{\ell(\gamma)}\right)+g+n\right),
\end{align*}
for some constant $C>0$ depending only on $f_{1}$ and $a$. 
\end{proof}

\begin{proof}[Proof of Theorem \ref{thm:main-thm}]
 Let $X^{(L)}_{\infty}$ be the associated surface to $X$ by collapsing
down the closed geodesics of length at most $L$ as defined at the
start of Section \ref{sec:Proof-of-Theorem}. Then, at least $c_{1}(a)n$
pants on $X$ that were guaranteed by Lemma \ref{thm:many-pants}
will still be pants on $X^{(L)}_{\infty}$ except now some of their
boundaries will have been collapsed down to cusps. Thus, on $X^{L}_{\infty}$,
we have at least $c_{1}(a)n$ pairs of pants that are pairs of pants
with at least $1$ geodesic boundary and  thrice punctured spheres.
The remaining boundaries of these pants that have not been collapsed
are simple closed geodesics lying entirely in $\tilde{X}^{(L)}$. 

We can choose in the construction of the map $q$ for the completion
of the collection of closed geodesics shorter than $L$ to include
all of the boundaries in the $c_{1}(a)n$ pants. Then, under the map
$q$, the pairs of pants map to their corresponding (potentially)
collapsed pants on $X^{(L)}_{\infty}$ and their boundaries have length
at most $K[q]10\pi(2a+1)\leqslant20\pi(2a+1)$. Moreover, since the
non-collapsed boundaries are actually simple closed geodesics in $\tilde{X}^{(L)}_{\infty}$,
which has systole at least $\frac{1}{2}L$ by Lemma \ref{lem:curves-on-limit-surface},
their boundary lengths live inside $[\frac{1}{2}L,20\pi(2a+1)]$.
This will mean they are now amenable to the prime geodesic theorem
in Section \ref{sec:PGT}. 

Let $g_{\infty}$ and $n_{\infty}$ be the total genus and number
of cusps on the connected components of $X^{(L)}_{\infty}$. We will
assume that $X^{(L)}_{\infty}$ is connected since otherwise we can
apply the following argument to each connected component, replacing
$g$ and $n$ by the genus and number of cusps on each component,
and then summing the contributions. 

Let $T>1$. By Theorem \ref{thm:trace-formula} with $f_{T}$ for
$X^{(L)}_{\infty}$,
\begin{align*}
\sum_{\lambda_{j}<\frac{1}{4}}h_{T}(r_{j}) & =-\sum_{\lambda_{j}\geqslant\frac{1}{4}}h_{T}(r_{j})-\frac{1}{4\pi}\int^{\infty}_{-\infty}h_{T}(r)\frac{-\varphi'}{\varphi}\left(\frac{1}{2}+ir\right)\text{d}r\\
 & +\frac{\mathrm{Vol}(X^{(L)}_{\infty})}{4\pi}\int^{\infty}_{-\infty}h_{T}(r)r\tanh(\pi r)\mathrm{d}r\\
 & +\sum_{\substack{\gamma\in\mathcal{P}\left(X^{(L)}_{\infty}\right)}
}\sum^{\infty}_{k=1}\frac{\ell(\gamma)f_{T}(k\ell(\gamma))}{2\sinh\left(\frac{k\ell(\gamma)}{2}\right)}+\frac{h_{T}(0)}{4}\mathrm{Tr}\left(I-\Phi\left(\frac{1}{2}\right)\right)\\
 & -n_{\infty}f_{T}(0)\log(2)-\frac{n_{\infty}}{2\pi}\int^{\infty}_{-\infty}h_{T}(r)\psi(1+ir)\mathrm{\mathrm{d}}r.
\end{align*}
First, by (\ref{eq:h_t}) and the fact that $h_{1}$ is a Schwartz
function and non-negative on $\mathbb{R}$,
\[
\frac{\mathrm{Vol}\left(X^{(L)}_{\infty}\right)}{4\pi}\int^{\infty}_{-\infty}h_{T}(r)r\tanh(\pi r)\mathrm{d}r\leqslant A_{0}\left(\frac{g_{\infty}+n_{\infty}}{T}\right),
\]
for some constant $A_{0}>0$ depending only on $f_{1}$. Similarly,
using the additional estimate $|\psi(1+ir)|\leqslant\log(1+r^{2})+\mathrm{const}$
as in the proof of Proposition \ref{prop:T=00003D1}, 
\[
\left|\frac{n_{\infty}}{2\pi}\int^{\infty}_{-\infty}h_{T}(r)\psi(1+ir)\mathrm{\mathrm{d}}r\right|\leqslant A_{1}n_{\infty},
\]
and moreover, since $h_{T}(0)\leqslant\|h_{1}\|_{\infty}T$, 
\[
\left|\frac{h_{T}(0)}{4}\mathrm{Tr}\left(I-\Phi\left(\frac{1}{2}\right)\right)\right|\leqslant A_{2}Tn_{\infty},
\]
for some constants $A_{1},A_{2}>0$ depending only upon $f_{1}$.
Trivially we also have
\[
n_{\infty}f_{T}(0)\log(2)\leqslant A_{3}n_{\infty},
\]
for some constant $A_{3}>0$ depending only upon $f_{1}$.

We now turn our attention to 
\[
-\sum_{\lambda_{j}\geqslant\frac{1}{4}}h_{T}(r_{j})-\frac{1}{4\pi}\int^{\infty}_{-\infty}h_{T}(r)\frac{-\varphi'}{\varphi}\left(\frac{1}{2}+ir\right)\text{d}r.
\]
By property 3 of the function $f_{1}$ using that $r_{j}\geqslant0$
since $\lambda_{j}\geqslant\frac{1}{4}$, we have 
\[
\sum_{\lambda_{j}\geqslant\frac{1}{4}}h_{T}(r_{j})=T\sum_{\lambda_{j}\geqslant\frac{1}{4}}h_{1}(Tr_{j})\leqslant T\sum_{\lambda_{j}\geqslant\frac{1}{4}}h_{1}(r_{j}).
\]
 Also, following \cite[(11.9)]{Iw2021},
\[
\frac{-\varphi'}{\varphi}\left(\frac{1}{2}+ir\right)=2\log b_{1}+S_{+}(r)+S_{-}(r),
\]
where
\begin{align*}
S_{+}(r) & =\sum_{\substack{j:s_{j}\text{ is a pole of }\varphi\\
s_{j}=\beta_{j}+i\sigma_{j},\ \beta_{j}<\frac{1}{2}
}
}\frac{1-2\beta_{j}}{\left(\frac{1}{2}-\beta_{j}\right)^{2}+\left(r-\sigma_{j}\right)^{2}}\geqslant0,\\
S_{-}(r) & =\sum_{\substack{j:\beta_{j}\text{ is a pole of }\varphi\\
\frac{1}{2}<\beta_{j}\leqslant1
}
}\frac{1-2\beta_{j}}{r^{2}+\left(\frac{1}{2}-\beta_{j}\right)^{2}}\leqslant0,
\end{align*}
and $b_{1}$ is the first coefficient in (\ref{eq:Dirichlet-determinant})
with $b_{1}\geqslant1$ c.f. Lemma \ref{lem:dirichlet-coef}. Thus,
\begin{align*}
\int^{\infty}_{-\infty}h_{T}(r)\frac{-\varphi'}{\varphi}\left(\frac{1}{2}+ir\right)\text{d}r & \leqslant\int^{\infty}_{-\infty}h_{T}\left(r\right)\left(2\log b_{1}+S_{+}(r)\right)\mathrm{d}r\\
 & \leqslant\int^{\infty}_{-\infty}Th_{1}\left(Tr\right)\left(2\log b_{1}+S_{+}(r)\right)\mathrm{d}r\\
 & \leqslant T\int^{\infty}_{-\infty}h_{1}\left(r\right)\left(2\log b_{1}+S_{+}(r)\right)\mathrm{d}r
\end{align*}
again, by properties 2 and 3 of the function $f_{1}$. It follows
from Proposition \ref{prop:T=00003D1} that there exists a constant
$A_{4}>0$ depending only upon $f_{1}$ such that

\begin{align*}
-\sum_{\lambda_{j}\geqslant\frac{1}{4}}h_{T}(r_{j}) & -\frac{1}{4\pi}\int^{\infty}_{-\infty}h_{T}(r)\frac{-\varphi'}{\varphi}\left(\frac{1}{2}+ir\right)\text{d}r\\
 & \geqslant-T\sum_{\lambda_{j}\geqslant\frac{1}{4}}h_{1}(r_{j})-\frac{T}{4\pi}\int^{\infty}_{-\infty}h_{1}\left(r\right)\left(2\log b_{1}+S_{+}(r)\right)\mathrm{d}r\\
 & \geqslant\frac{T}{4\pi}\int^{\infty}_{-\infty}h_{1}\left(r\right)S_{-}(r)\mathrm{d}r-A_{4}T\left(g_{\infty}+n_{\infty}\right).
\end{align*}
The summation over the closed geodesics from Proposition \ref{prop:T=00003D1}
vanished because $\mathrm{sys}(X^{(L)}_{\infty})>\frac{1}{2}L$. The
integral
\[
\int^{\infty}_{-\infty}h_{1}\left(r\right)S_{-}(r)\mathrm{d}r
\]
can be controlled by an integration by parts. Note that 
\[
\int^{\infty}_{-\infty}S_{-}(r)h_{1}(r)\mathrm{d}r=\sum_{\substack{j:\beta_{j}\text{ is a pole of }\varphi\\
\frac{1}{2}<\beta_{j}\leqslant1
}
}\int^{\infty}_{-\infty}\frac{1-2\beta_{j}}{r^{2}+\left(\frac{1}{2}-\beta_{j}\right)^{2}}h_{1}(r)\mathrm{d}r,
\]
where interchanging summation and integration is justified since the
sum has only finitely many terms \cite{Ot.Ro09}. Then for any $\beta_{j}>\frac{1}{2}$,
\begin{align*}
\left|\int^{\infty}_{-\infty}\frac{1-2\beta_{j}}{r^{2}+\left(\frac{1}{2}-\beta_{j}\right)^{2}}h_{1}(r)\mathrm{d}r\right| & =\left|2\int^{\infty}_{-\infty}h_{1}'(r)\arctan\left(\frac{r}{\frac{1}{2}-\beta_{j}}\right)\mathrm{d}r\right|\\
 & \leqslant\pi\int^{\infty}_{-\infty}\left|h_{1}'(r)\right|\mathrm{d}r.
\end{align*}
It follows that 
\[
\left|\int^{\infty}_{-\infty}S_{-}(r)h_{1}(r)\mathrm{d}r\right|\leqslant A_{5}(g_{\infty}+n_{\infty}),
\]
for a constant $A_{5}>0$ dependent only upon $f_{1}$. Put together,
along with the fact that $\mathrm{supp}(f_{T})\subseteq[-T,T]$, there
exists a constant $A>0$ depending only upon $f_{1}$ such that

\begin{align}
\sum_{\lambda_{j}<\frac{1}{4}}h_{T}(r_{j}) & \geqslant\underbrace{\sum_{\substack{\gamma\in\mathcal{P}\left(X^{(L)}_{\infty}\right)\\
\ell(\gamma)\leqslant T
}
}\sum^{\infty}_{k=1}\frac{\ell(\gamma)f_{T}(k\ell(\gamma))}{2\sinh\left(\frac{k\ell(\gamma)}{2}\right)}}_{(a)}-AT\left(g_{\infty}+n_{\infty}\right).\label{eq:LB}
\end{align}
Since the summand of (a) is non-negative and $2\sinh\left(\frac{\ell(\gamma)}{2}\right)\leqslant e^{\frac{\ell(\gamma)}{2}}$
we have 
\begin{align*}
\sum_{\substack{\gamma\in\mathcal{P}\left(X^{(L)}_{\infty}\right)\\
\ell(\gamma)\leqslant T
}
}\sum^{\infty}_{k=1}\frac{\ell(\gamma)f_{T}(k\ell(\gamma))}{2\sinh\left(\frac{k\ell(\gamma)}{2}\right)} & \geqslant\sum_{\substack{\gamma\in\mathcal{P}\left(X^{(L)}_{\infty}\right)\\
1<\ell(\gamma)\leqslant T
}
}\sum^{\infty}_{k=1}\frac{\ell(\gamma)f_{T}(k\ell(\gamma))}{2\sinh\left(\frac{k\ell(\gamma)}{2}\right)}\\
 & \geqslant\sum_{\substack{\gamma\in\mathcal{P}\left(X^{(L)}_{\infty}\right)\\
1<\ell(\gamma)\leqslant T
}
}\ell(\gamma)e^{-\frac{\ell(\gamma)}{2}}f_{1}\left(\frac{\ell(\gamma)}{T}\right).
\end{align*}
Up to now, if we were instead working with a connected component of
$\tilde{X}^{(L)}_{\infty}$ we would have obtained the exact same
bounds but with $g_{\infty}$ and $n_{\infty}$ replaced by the respective
genera and number of cusps on the component. Summing the contributions
and using that the genera and number of cusps will sum to $g_{\infty}$
and $n_{\infty}$ then gives the exact same outcome. 

We now exploit the existence of the embedded pants in $X^{(L)}_{\infty}$.

Let $P$ be such a pair of pants, then any closed geodesic inside
$P$ gives rise to a unique closed geodesic of the same length in
the surface $X^{(L)}_{\infty}$. A lower bound for (a) (using non-negativity
of $f_{T}$) arises from a lower bound on the number of geodesics
in each of these pairs of pants. Let $T$ be sufficiently large (independent
of the surface) so that by Theorems \ref{thm:Naud-PGT} and \ref{thm:thrice-punc-sphere},
and equation (\ref{eq:log-int}), since the pants have their boundaries
in $[\frac{1}{2}L,20\pi(2a+1)]$, there exists a constant $B>0$ (depending
only upon $a$) such that 

\begin{align*}
\#\left\{ \gamma\in\mathcal{P}\left(P\right)\mid\frac{T}{6}\leqslant\ell(\gamma)\leqslant\frac{T}{3}\right\}  & \geqslant\frac{e^{\delta\left(P\right)\frac{T}{3}}}{\delta(P)\frac{T}{3}}-\frac{e^{\delta\left(P\right)\frac{T}{6}}}{\delta(P)\frac{T}{6}}\left(1+\frac{12}{\delta(P)T}\right)\\
 & \thinspace\thinspace\thinspace-B\left(e^{\left(\frac{\delta\left(P\right)}{6}+\frac{1}{12}\right)T}+e^{\left(\frac{\delta\left(P\right)}{12}+\frac{1}{24}\right)T}\right)\\
 & \geqslant\frac{e^{\delta\left(P\right)\frac{T}{3}}}{T}-2Be^{\left(\frac{\delta\left(P\right)}{6}+\frac{1}{12}\right)T}.
\end{align*}
Indeed, since $\delta(P)>\frac{1}{2}$, we can pick $T$ such that
$12e^{-\frac{T}{12}}\left(1+\frac{24}{T}\right)\leqslant2$ for which
$T>12\log(12)$ works. Then, for sufficiently large $T$ (depending
only upon $f_{1}$ and $a$) we have
\begin{align}
\sum_{\substack{\gamma\in\mathcal{P}\left(P\right)\\
1<\ell(\gamma)\leqslant T
}
}\ell(\gamma)e^{-\frac{l(\gamma)}{2}}f_{1}\left(\frac{\ell(\gamma)}{T}\right) & \geqslant C\sum_{\substack{\gamma\in\mathcal{P}\left(P\right)\\
\frac{T}{6}\leqslant\ell(\gamma)\leqslant\frac{T}{3}
}
}\ell(\gamma)e^{-\frac{l(\gamma)}{2}}\nonumber \\
 & \geqslant\frac{C}{3}e^{-\frac{T}{6}}\left(e^{\delta\left(P\right)\frac{T}{3}}-2BTe^{\left(\frac{\delta\left(P\right)}{6}+\frac{1}{12}\right)T}\right)\label{eq:pants-geo-count}\\
 & \geqslant\frac{C}{6}e^{\left(\delta\left(P\right)-\frac{1}{2}\right)\frac{T}{3}},\nonumber 
\end{align}
where $C=\inf_{x\in\left[\frac{1}{6},\frac{1}{3}\right]}f_{1}(x)>0$
and we have used non-negativity of the summand to restrict to geodesics
with lengths in $\left[\frac{T}{6},\frac{T}{3}\right]$. Recall that
$\kappa=\kappa(a)>0$ is from Lemma \ref{lem:crit-exponent} such
that $\delta\left(P\right)>\frac{1}{2}+\kappa$ uniformly for all
of the pants $P$ embedded as above inside $X^{(L)}_{\infty}$. Then,
\begin{equation}
\sum_{\substack{\gamma\in\mathcal{P}\left(X^{(L)}_{\infty}\right)\\
\ell(\gamma)\leqslant T
}
}\sum^{\infty}_{k=1}\frac{\ell(\gamma)f_{T}(k\ell(\gamma))}{2\sinh\left(\frac{k\ell(\gamma)}{2}\right)}\geqslant Dn_{\infty}e^{\frac{1}{3}\kappa T},\label{eq:geom-lb}
\end{equation}
for a constant $D>0$ that depends only upon $f_{1}$ and $a$. The
lower bound with $n_{\infty}$ and some exponential growth in $T$
will be paramount in establishing that this dominates any arising
error terms.

On the other hand, basic estimates with $h_{T}$ show that since $0<d<\kappa^{2}$
(recall that we fixed $d$ with this property at the beginning of
this section),
\begin{align*}
\sum_{\lambda_{j}<\frac{1}{4}}h_{T}(r_{j}) & =\sum_{\lambda_{j}<\frac{1}{4}-d}h_{T}(r_{j})+\sum_{\frac{1}{4}-d\leqslant\lambda_{j}<\frac{1}{4}}h_{T}(r_{j})\\
 & \leqslant\#\left\{ j:\lambda_{j}<\frac{1}{4}-d\right\} \|f_{1}\|_{1}Te^{\frac{T}{2}}+\|f_{1}\|_{1}Te^{T\sqrt{d}}\#\left\{ j:\frac{1}{4}-d\leqslant\lambda_{j}<\frac{1}{4}\right\} .
\end{align*}
Thus putting this together with equations (\ref{eq:LB}) and (\ref{eq:geom-lb}),
we obtain a constant $\tilde{A}>0$ that depends only on $a$ and
$f_{1}$ such that
\begin{align*}
\#\left\{ j:\lambda_{j}<\frac{1}{4}-d\right\}  & \geqslant\frac{D}{\|f_{1}\|_{1}T}n_{\infty}e^{T\left(\frac{\kappa}{3}-\frac{1}{2}\right)}-\tilde{A}n_{\infty}e^{T\left(\sqrt{d}-\frac{1}{2}\right)}.
\end{align*}
Since $\sqrt{d}<\frac{\kappa}{3}$, we can choose $T$ large enough
depending only upon $a$ and $f_{1}$ so that 
\[
\tilde{A}n_{\infty}e^{T\left(\sqrt{d}-\frac{1}{2}\right)}<\frac{D}{2\|f_{1}\|_{1}T}n_{\infty}e^{T\left(\frac{\kappa}{3}-\frac{1}{2}\right)}.
\]
It follows that 
\[
\#\left\{ j:\lambda_{j}<\frac{1}{4}-d\right\} \geqslant\frac{D}{2\|f_{1}\|_{1}T}n_{\infty}e^{T\left(\frac{\kappa}{3}-\frac{1}{2}\right)}.
\]
Since $g_{\infty}\leqslant g<an\leqslant an_{\infty}$ (because collapsing
geodesics can only reduce genus and increase the number of cusps),
the existence of a $b>0$ depending only upon $a$ and $f_{1}$ for
which $X^{(L)}_{\infty}$ has at least $b\mathrm{Vol}(X^{(L)}_{\infty})=b\mathrm{Vol}(X)\geqslant2\pi b(2g+n-2)$
eigenvalues smaller than $\frac{1}{4}-d$ now follows. Since $L$
was chosen to satisfy (\ref{eq:bilips-UB}) and (\ref{eq:dist-LB})
at the start of the section, we can apply Proposition \ref{prop:passing-evalues}
to obtain at least $2\pi b(2g+n-2)$ on $X$ that are smaller than
$\frac{1}{4}-\frac{d}{4}$ as required.
\end{proof}

\section*{Acknowledgments}

JT was supported by funding from the Leverhulme Trust through a Leverhulme
Early Career Fellowship (grant number ECF-2024-440) and while some
of this work was being completed was also funded by the European Research
Council (ERC) under the European Union’s Horizon 2020 research and
innovation programme (grant agreement No 949143). We thank Irving
Calderón, Michael Magee and Sugata Mondal for conversations about
this work. We also thank the referees for their comments and corrections
in particular to the proof of Proposition \ref{prop:passing-evalues}.

\bibliographystyle{amsalpha}
\bibliography{manyeigenvalues}

\noindent Will Hide, \\
Mathematical Institute,\\
University of Oxford, \\
Andrew Wiles Building, OX2 6GG Oxford,\\
United Kingdom

\noindent\texttt{william.hide@maths.ox.ac.uk}~\\
\texttt{}~\\

\noindent Joe Thomas, \\
Department of Mathematical Sciences,\\
Durham University, \\
Lower Mountjoy, DH1 3LE Durham,\\
United Kingdom

\noindent\texttt{joe.thomas@durham.ac.uk}

\end{document}